\newtheorem{Thm}{Theorem}
\newtheorem*{Thm*}{Theorem}
\newtheorem{Remark}{Remark}
\newtheorem{lem}{Lemma}
\newtheorem{cor}{Corollary}
\newtheorem{prop}{Proposition}
\DeclareMathOperator*{\res}{Res}
\newcommand{\Ocal}{O}
\global\long\def\epsilon{\varepsilon}
\DeclareMathAlphabet{\mathpzc}{OT1}{pzc}{m}{it}
\begin{document}
\date{}

\title{An asymptotic formula for the $2k$-th power mean value of $\left| (L'/L)(1+it_0, \chi)\right|$}
\author{Kohji Matsumoto and Sumaia Saad Eddin}

\maketitle
{\def\thefootnote{}
\footnote{{\it Mathematics Subject Classification (2010)}. 11M06; 11Y35}}

\begin{abstract}
Let $q$ be a positive integer ($\geq 2$), 
$\chi$ be a Dirichlet character modulo $q$, $L(s, \chi)$ be the attached 
Dirichlet $L$-function, 
and let $L^\prime(s, \chi)$ denote its derivative with respect to the complex variable $s$. Let $t_0$ be any fixed real number. The main purpose of this paper is to give an asymptotic formula for the $2k$-th power mean value of $\left|(L^\prime/L)(1+it_0, \chi)\right|$ when $\chi$ runs over all Dirichlet characters modulo $q$ (except the principal character when $t_0=0$).
\end{abstract}

\maketitle

\section{Introduction and Statement of the results} 
Let $q$ be a positive integer, and $s=\sigma+it$ be a complex variable.
Let $\chi$ be a Dirichlet character modulo $q$,
$L(s, \chi)$ be the attached Dirichlet $L$-function, 
and let $L^\prime(s, \chi)$ denote its derivative with respect to $s$. 
The values at $1$ of Dirichlet $L$-functions have received considerable attention, due to their algebraical or geometrical interpretation. Assuming the generalized Riemann hypothesis (GRH), Littlewood~\cite{Little} proved that 
\begin{equation*}
|L(1, \chi)|\leq \left( 2+o(1)\right)e^{\gamma}\log \log q.
\end{equation*}
For infinitely many real characters $\chi$, he also proved that 
\begin{equation*}
|L(1, \chi)|\geq \left( 1+o(1)\right)e^{\gamma}\log \log q.
\end{equation*}
In 1948, Chowla~\cite{Chowla} showed that this latter holds unconditionally. The asymptotic properties for the $2k$-th power mean value of $L$-functions at $s=1$ have been studied by many authors: when $k=1$ and $q=p$ is a prime number by Walum~\cite{W}, Slavutski{\u\i}~\cite{Sla}, \cite{Sla2} and Zhang~\cite{Zh}, \cite{Zh1990}. Walum's proof is based on the Fourier series to evaluate $\sum|L(1, \chi)|^2$ where $\chi$ ranges the odd characters modulo $p$. The sharper asymptotic expansion has been obtained by Katsurada and the first author~\cite{K.M}. For general $k$, Zhang and Wang~\cite{Zh.W} presented an exact calculating formula for the $2k$-th power mean value of $L$-functions with $k\geq 3$.

Less is known about $L^\prime/L$ evaluated also at the point $s=1$, although these values are known to be fundamental in studying the distribution of primes since Dirichlet in 1837.  In this direction of research, using the estimates of the character sums and the Bombieri-Vinogradov theorem, Zhang~\cite{Zh1992} gave an asymptotic formula for 
 $$\sum_{q\leq A}\frac{q}{\varphi(q)}\sum_{\substack{\chi \;{\rm mod}\; q \\ \chi\neq \chi_0}}\left| \frac{L^{\prime}(1, \chi)}{L(1, \chi)}\right|^4$$
for the real number $A>3$, where $\varphi$ is the Euler totient function and
$\chi_0$ denotes the principal character.
  Ihara and the first author~\cite{I.M2014} (using the same argument as in \cite{I.M2011}) gave a result related to the value-distributions of $\{(L^{\prime}/L)(s, \chi)\}_{\chi}$ and of $\{(\zeta^{\prime}/\zeta)(s+i\tau)\}_{\tau}$, where $\chi$ runs over Dirichlet characters with prime conductors and $\tau$ runs over $\mathbb{R}$. Ihara, Murty and Shimura~\cite{I.M.S} studied the maximal absolute value of the logarithmic derivatives $(L^{\prime}/L)(1, \chi)$. \\
  Assuming the GRH, they showed that 
 $$\max_{\substack{\chi \;{\rm mod}\; p \\ \chi\neq \chi_0}}\left| \frac{L^{\prime}(1, \chi)}{L(1, \chi)}\right|\leq (2+o(1))\log \log p, $$ 
 where $p$ is a prime.
Unconditionally, they proved, for any $\varepsilon>0$, that 
\begin{equation}
\label{I.M.S}
\frac{1}{|X_p|}\sum_{\substack{\chi \;{\rm mod} \;p \\ \chi\neq \chi_0}}\left|\frac{L^\prime(1, \chi)}{L(1, \chi)}\right|^{2k}
= 
\sum_{m\geq 1}\frac{\left(\sum\limits_{m=m_1 m_2\cdots m_k}\Lambda(m_1)\cdots\Lambda(m_k)\right)^2}{m^2}+\Ocal\left(p^{\varepsilon-1}\right),
\end{equation}
where $\Lambda(.)$ denotes the von Mangoldt function. The proof of this result is based on the study of distribution of zeros of $L$-functions. 
In this paper, we give an asymptotic formula for the $2k$-th power mean value of $\left|(L^\prime/L)(1+it_0, \chi)\right|$ when $\chi$ runs over all Dirichlet characters modulo $q$ and any fixed  real number $t_0$. Denote by $\varepsilon$ an arbitrarily small positive number, not necessarily the same at each occurrence. 
Put $Q=(\log q)^2/\log\log q$.   Our result is precisely the following:
\begin{Thm}
\label{Thm1}
Let $\chi$ be a Dirichlet character modulo $q\geq 2$. For any fixed real number $t_0\neq 0$ and an arbitrary positive integer $k$, we have
\begin{multline}
\label{00}
\frac{1}{\varphi(q)}\sum_{\chi \mkern3mu \mathrel{\textsl{mod}} \mkern3mu q}\left|\frac{L^\prime(1+it_0, \chi)}{L(1+it_0, \chi)}\right|^{2k}
= 
\sum_{\substack{m\geq 1\\ (m,q)=1}}\frac{\left(\sum\limits_{m=m_1 m_2\cdots m_k}\Lambda(m_1)\cdots\Lambda(m_k)\right)^2}{m^2}\\
+\Ocal\left(\frac{1}{q}\left(\log q\right)^{4k+4}+
\left(\log \left(q(q+|t_0|+2)\right)\right)^{2k}
\exp\left(-\frac{B_1(\log q)^2}{\log(q+|t_0|+2)}\right)
+\frac{1}{\varphi(q)}Z_{k,t_0}(q)
\right),
\end{multline}
where
\begin{align}
\label{estfinal_Z_inthm}
Z_{k,t_0}(q)=
\left\{
\begin{array}{ll}
\Ocal\left((\log q)^{2k-2}e^{-B_2|t_0|}Q^{2k}\right) & (|t_0|>1),\\
\displaystyle{\Ocal\left(\left((\log q)^{2k-2}+\frac{1}{|t_0|^{k-1}}\right)
\frac{Q^k}{|t_0|}\left(Q^k+\frac{1}{|t_0|^k}\right) 
\right)} & (0<|t_0|\leq 1)
\end{array}\right.
\end{align}
with certain positive constants $B_1$ and $B_2$.
\end{Thm}

As we will see in the proof of the theorem, the exponential factor in the above error term
is $\leq q^{-1}$ when $q\geq |t_0|+2$ (see Subsection \ref{subsec5-3}).   
Therefore, noting $\varphi(q)\gg q/\log\log q$, we see that the error term tends to
$0$ as $q\to\infty$ while $t_0$ is fixed.

\begin{Thm}
\label{cor1}
Let $\chi$ be a Dirichlet character modulo $q\geq 2$. For an arbitrary positive integer $k$, we have
\begin{align}
\label{01}
\frac{1}{\varphi(q)}\sum_{\substack{\chi \mkern3mu \mathrel{\textsl{mod}} \mkern3mu q\\ \chi\neq \chi_0}}\left|\frac{L^{\prime}(1, \chi)}{L(1, \chi)}\right|^{2k}
&= 
\sum_{\substack{m\geq 1 \\(m, q)=1}}\frac{\left(\sum\limits_{m=m_1 m_2\cdots m_k}\Lambda(m_1)\cdots\Lambda(m_k)\right)^2}{m^2}\\
&+\Ocal\left(\frac{(\log q)^{4k+4}}{q}+\frac{1}{\varphi(q)}Z_{k,0}(q)\right),\notag
\end{align}
with
$$
Z_{k,0}(q)
=\Ocal\left((\log q)^{4k}Q^{2k}+
\delta_1\exp\left(-B_3(1-\beta_1)(\log q)^2\right)(1-\beta_1)^{-2k}\right),
$$
where $B_3$ is a certain positive constant, $\beta_1$ denotes the Siegel zero
(defined just after the statement of Proposition \ref{thm3}), and $\delta_1=1$ if $\beta_1$ exists, and $=0$
otherwise.
\end{Thm}

It is worth mentioning that the condition $(m,q)=1$ in the main term in Eqs.~\eqref{00} and \eqref{01} is omitted in the case when $q$ is a prime number (see Remark \ref{rem1} at 
the end of Section~\ref{section5}). \\

Siegel's theorem (see \cite[Corollary~11.15]{Montgomery}) implies that
$1-\beta_1\gg q^{-\varepsilon}$.   Using this estimate we have
$$
\delta_1\exp\left(-B_3(1-\beta_1)(\log q)^2\right)(1-\beta_1)^{-2k}
\leq \delta_1 (1-\beta_1)^{-2k} \ll q^{2k\varepsilon},
$$
which gives the same estimate as Eq.~\eqref{I.M.S}.
Theorem \ref{cor1} provides an refinement (and a generalization to the case of general modulus $q$) on Eq.~\eqref{I.M.S}. In fact, when $q=p$ is a prime, it is shown in \cite{I.M.S} that the factor $p^{\varepsilon}$ in the error term in Eq.~\eqref{I.M.S} can be replaced by a certain $\log$-power under the assumption of the GRH. Our result gives a same type of improvement 
under the much weaker assumption that the Siegel zero does not exist. 
Another merit of our present method is that we can show the mean value formula not only
at the point $s=1$, but at any point on the line $\Re s=1$ (Theorem \ref{Thm1}).
\\

As a consequence of our main results, we show that the values $\left| (L'/L)(1+it_0, \chi)\right|^2$ behave according to a distribution law. It can be formulated as follows. 
\begin{Thm}
\label{Thm2}
There exists a unique probability measure $\mu=\mu(t_0)$ such that for any positive integer $k$, we have  
\begin{equation*}
\frac{1}{p-1}\sum_{\chi \mkern3mu \mathrel{\textsl{mod}} \mkern3mu p}^{\quad\  \prime} \left|\frac{L^\prime(1+it_0, \chi)}{L(1+it_0, \chi)}\right|^{2k} \mathrel{\mathop{\longrightarrow}\limits_{p \rightarrow +\infty}}
\int\limits_{0}^{+\infty}v^{k}\, d\mu (v),
\end{equation*}
where $\sum_{\chi \mkern3mu \mathrel{\textsl{mod}} \mkern3mu p}^{\prime}$  denotes the summation over all  characters $\chi$ modulo  $p$  with $p$ a prime number (expect the principal character in the case $t_0=0$). 
\end{Thm}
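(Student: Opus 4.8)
The plan is to deduce the existence of the limiting measure $\mu(t_0)$ from the moment asymptotics already established in Theorems \ref{Thm1} and \ref{cor1} via the method of moments. First I would set $M_k := \sum_{m\geq 1}\big(\sum_{m=m_1\cdots m_k}\Lambda(m_1)\cdots\Lambda(m_k)\big)^2 m^{-2}$ (for $t_0=0$ one only restricts to $(m,q)=1$, but since $q=p\to\infty$ the coprimality condition disappears in the limit — this is exactly the remark after Theorem \ref{cor1} that for prime $q$ the condition $(m,q)=1$ may be dropped, up to a negligible correction). Specializing \eqref{00} and \eqref{01} to $q=p$ prime and letting $p\to+\infty$, the error terms are $O_{t_0}\big((\log p)^{A}/p\big)\to 0$, so for every fixed $k\geq 1$,
\[
\frac{1}{p-1}\sum_{\chi \ \mathrm{mod}\ p}{}^{\prime}\left|\frac{L'}{L}(1+it_0,\chi)\right|^{2k}\;\longrightarrow\; M_k .
\]
Thus, writing $V_p$ for the random variable $|(L'/L)(1+it_0,\chi)|^2$ where $\chi$ is chosen uniformly among the $p-1$ characters counted (non-principal if $t_0=0$), the $k$-th moment of $V_p$ converges to $M_k$ for every $k$.

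Next I would verify that the sequence $(M_k)_k$ is a determinate moment sequence, i.e. that there is a \emph{unique} probability measure on $[0,\infty)$ with these moments; this gives both existence and uniqueness of $\mu=\mu(t_0)$ and, by the standard moment-convergence theorem (Fréchet–Shohat), the claimed weak convergence $\int_0^\infty v^k\,d\mu(v) = M_k = \lim_p \mathbb{E}[V_p^k]$. To get determinacy it suffices to bound the growth of $M_k$, e.g. via Carleman's condition $\sum_k M_k^{-1/(2k)}=\infty$. The rate is easy to control: $\sum_{m=m_1\cdots m_k}\Lambda(m_1)\cdots\Lambda(m_k)\le (\log m)^k$ crudely (each $\Lambda(m_i)\le\log m$, and the number of ordered factorizations of $m$ into $k$ parts is $d_k(m)\ll_\varepsilon m^\varepsilon$), so
\[
M_k \;\le\; \sum_{m\ge 1}\frac{d_k(m)^2(\log m)^{2k}}{m^2}\;\ll\; C^{k}(2k)!\quad\text{for some absolute }C,
\]
after splitting off the prime powers, which grows slowly enough that Carleman's criterion holds. (Alternatively one may note $M_k$ equals the $k$-th moment of a genuine random Dirichlet-series object and invoke an explicit Euler-product description, but the crude bound suffices.)

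Finally I would assemble the pieces: the measure $\mu(t_0)$ is defined as the unique Hamburger/Stieltjes solution to the moment problem with data $(M_k)_{k\ge 0}$ (with $M_0=1$, which follows by taking "$k=0$" — i.e. the trivial averaging — or directly), it is supported on $[0,\infty)$ since each $V_p\ge 0$, and moment convergence plus determinacy upgrades to the asserted convergence of the averages. The main obstacle is the determinacy step: one must be slightly careful that the factorial-type growth bound on $M_k$ is genuine and uniform (in particular that the $t_0$-dependence, which enters only through the error terms and not through $M_k$, plays no role in the limit), and that Carleman's condition is correctly applied to a Stieltjes (half-line) rather than Hamburger moment problem. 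Everything else is a routine application of the method of moments to the asymptotics of Theorems \ref{Thm1}–\ref{cor1}.
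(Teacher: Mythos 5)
Your proposal is correct and follows essentially the same route as the paper: specialize the moment asymptotics of Theorems~\ref{Thm1} and~\ref{cor1} to prime modulus (so the coprimality condition drops by Remark~\ref{rem1}), pass to the limit $p\to\infty$, and settle determinacy via Carleman's condition using $M_k\ll(2k)!$. The only packaging difference is that you invoke the Fr\'echet--Shohat theorem as a black box whereas the paper verifies the Stieltjes solvability criterion by hand (non-negativity of the limiting Hankel determinants $|M_{i+j}|$ and $|M_{i+j+1}|$ as limits of the finite-$p$ ones), and your bound carries a superfluous $d_k(m)^2$ factor that Lemma~\ref{lem4} makes unnecessary --- but neither affects the argument.
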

This is an existence (and unicity) result, but getting an actual description of $\mu$ is still a tantalizing problem. It is likely to have a geometrical or arithmetical interpretation, on which our approach gives, so far, no information. 
If $\mu$ is absolutely continuous, then there exists a Radon-Nikod{\'y}m density function
for $\mu$, which may be regarded as a kind of ``$M$-function'' in the sense of
\cite{Iha08} \cite{I.M2014}.

Here is a plot of the distribution function 
\begin{equation}
\label{D}
D_q(v,t_0)= \frac{1}{\varphi(q)} \ \#^{\prime} \left\{ \chi\mkern3mu \mathrel{\textsl{mod}} \mkern3mu q \ ; \ \left|\frac{L^\prime(1+it_0, \chi)}{L(1+it_0, \chi)}\right|^2 \leq v\right\},
\end{equation}
for $q=59, 101$ and $257$ and $t_0=0$. The symbol $\#^{\prime}$ denotes the number of Dirichlet characters modulo $q$ satisfying the condition $\left|(L^\prime/L)(1+it_0, \chi)\right|^2 \leq v$ except the principal character in the case $t_0=0.$

\begin{figure}[h]
\centering
\includegraphics{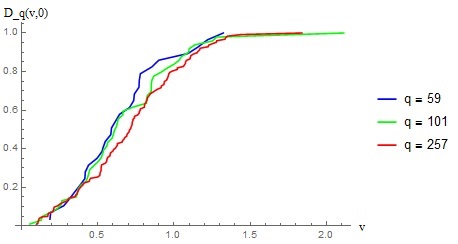}
\caption{The distribution function $D_q(v,0)$.}
\label{Fig1}
\end{figure}
In order to prove our main results, we first prepare several necessary tools in Sections~\ref{Sec2} and \ref{Sec3}. 
\section{Some well-known results}
\label{Sec2}

\begin{prop}
\label{thm1}
Let $m$, $n$, $q$ be positive integers, with $(n, q)=1$. Then we have 
\begin{equation*}
\sum_{\chi \mkern3mu \mathrel{\textsl{mod}} \mkern3mu q }\chi(m)\overline{\chi}(n)=
\begin{cases}
\displaystyle{\varphi(q)} \quad  &\textrm{when $m\equiv n(\mkern3mu \mathrel{\textsl{mod}} \mkern3mu q) $}\\
  \displaystyle{0} \quad  &\textrm{otherwise,}
\end{cases}
\end{equation*}
where the sum is over all characters $\chi (\mkern3mu \mathrel{\textsl{mod}} \mkern3mu q)$.   
\end{prop}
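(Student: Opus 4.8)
The statement to prove is the orthogonality relation for Dirichlet characters modulo $q$: for positive integers $m$ and $n$ with $(n,q)=1$, the sum $\sum_{\chi \bmod q} \chi(m)\overline{\chi}(n)$ equals $\varphi(q)$ when $m\equiv n \pmod q$ and $0$ otherwise. The plan is to reduce this to the orthogonality relations for characters of the finite abelian group $G=(\mathbb{Z}/q\mathbb{Z})^\times$ and then handle the cases according to whether $(m,q)=1$.

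First I would dispose of the case $(m,q)>1$. Since $(n,q)=1$ but $(m,q)>1$, we cannot have $m\equiv n\pmod q$ (a congruence modulo $q$ forces $(m,q)=(n,q)$), so we are in the "otherwise" branch and must show the sum vanishes. By definition, every Dirichlet character $\chi$ modulo $q$ satisfies $\chi(m)=0$ whenever $(m,q)>1$, hence each term of the sum is $0$ and the total is $0$.

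Next, the main case $(m,q)=1$. Here $\chi(m)$ and $\overline{\chi}(n)=\chi(n)^{-1}$ are nonzero, and since $\chi$ restricted to units is a genuine group character of $G$, the map $\chi\mapsto \chi(mn^{-1})$ (where $n^{-1}$ is the inverse of $n$ in $G$) is well defined, and $\chi(m)\overline{\chi}(n)=\chi(mn^{-1})$. So the sum becomes $\sum_{\chi\in\widehat{G}}\chi(a)$ with $a=mn^{-1}\in G$, where $\widehat{G}$ is the character group. Now I invoke the standard orthogonality fact for finite abelian groups: $\sum_{\chi\in\widehat{G}}\chi(a)$ equals $|G|=\varphi(q)$ if $a$ is the identity and $0$ otherwise. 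To prove this fact one picks, in the case $a\neq e$, a character $\psi$ with $\psi(a)\neq 1$ (such a $\psi$ exists because $\widehat{G}$ separates points of a finite abelian group, e.g. via the structure theorem writing $G$ as a product of cyclic groups), and observes that $\psi(a)\sum_{\chi}\chi(a)=\sum_{\chi}(\psi\chi)(a)=\sum_{\chi}\chi(a)$ by reindexing, forcing the sum to be $0$. Finally, $a=e$ in $G$ means $mn^{-1}\equiv 1\pmod q$, i.e. $m\equiv n\pmod q$, which matches the two cases in the statement.

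The only genuine input is the existence of a separating character, i.e. that $\widehat{G}$ is large enough; this is the one step that is not purely formal manipulation, and it is where the structure of $(\mathbb{Z}/q\mathbb{Z})^\times$ as a finite abelian group enters. Everything else is bookkeeping: matching the "non-unit" case to the vanishing of $\chi(m)$, and matching the congruence condition to triviality of $a$ in the group. Since this is a classical result (it is, after all, stated here as a citation-style Proposition imported "from elsewhere"), I would keep the write-up brief, citing a standard reference for the orthogonality of characters of finite abelian groups rather than reproving the separation property in detail.
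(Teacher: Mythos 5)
Your proof is correct and is the standard derivation of the orthogonality relation; the paper itself gives no argument here and simply cites \cite[Theorem~4.8]{Montgomery}, which proves it exactly along the lines you describe (reduction to the case $(m,q)=1$, rewriting $\chi(m)\overline{\chi}(n)=\chi(mn^{-1})$, and orthogonality of characters of the finite abelian group $(\mathbb{Z}/q\mathbb{Z})^\times$ via a separating character). Nothing to add.
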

\begin{proof}
See~\cite[Theorem~4.8]{Montgomery}.
\end{proof}

\begin{prop}
\label{thm3}
Let $q\geq 1$.
There is an effectively computable absolute positive constant $c_0$  such that 
$$ \prod_{\chi \mkern3mu \mathrel{\textsl{mod}} \mkern3mu q} L(s, \chi)$$
 has at most one zero $\beta_1$ in the region
$$\sigma \geq 1-\frac{c_0}{\log (q (|t|+2))}. $$

Such a zero, if it exists, is real, simple and corresponds to a non-principal
real character $\chi_1$.
\end{prop}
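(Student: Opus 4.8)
The statement is the classical Landau--Page theorem, and the cleanest route is to assemble the standard ingredients rather than to invent anything new; one could in fact simply quote \cite[Ch.~14]{Montgomery} or an analogous reference. Still, here is how I would organise the argument. It rests on three elementary facts once the Hadamard factorisation of $L(s,\chi)$ is in hand. \textbf{(a)} For $1<\sigma\le 2$ and a non-principal $\chi\bmod q$ with primitive inducing character, the partial fraction formula combined with Stirling's estimate gives $-\mathrm{Re}\,\frac{L'}{L}(\sigma+it,\chi)\le A\log\big(q(|t|+2)\big)-\sum_{\rho}\mathrm{Re}\,\frac1{\sigma+it-\rho}$, the sum being over the non-trivial zeros and every term of which is $>0$ since $\mathrm{Re}\,\rho\le1<\sigma$; for the principal character one has the sharper $-\mathrm{Re}\,\frac{L'}{L}(\sigma,\chi_0)\le-\mathrm{Re}\,\frac{\zeta'}{\zeta}(\sigma)\le\frac1{\sigma-1}+A$ with an \emph{absolute} constant $A$, since deleting the Euler factors at $p\mid q$ only decreases $-\frac{L'}{L}(\sigma,\chi_0)$. \textbf{(b)} The identity $3+4\cos\theta+\cos2\theta=2(1+\cos\theta)^2\ge0$. \textbf{(c)} For real characters $\chi_1,\chi_2$ and every $n$, $1+\chi_1(n)+\chi_2(n)+\chi_1(n)\chi_2(n)=(1+\chi_1(n))(1+\chi_2(n))\ge0$.

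\textbf{Step 1: the region carries at most a real zero of a real non-principal character.} Given a zero $\rho=\beta+i\gamma$ of $L(s,\chi)$, apply \textbf{(b)} to the Dirichlet series of $-\frac{d}{ds}\log\big(L(s,\chi_0)^3\,L(s+i\gamma,\chi)^4\,L(s+2i\gamma,\chi^2)\big)$ at $s=\sigma\in(1,2]$, whose $n$-th coefficient is $\Lambda(n)\chi_0(n)\big(3+4\cos\theta_n+\cos2\theta_n\big)\ge0$ with $\theta_n=\arg\chi(n)-\gamma\log n$. This yields
\[
3\Big(\!-\mathrm{Re}\,\tfrac{L'}{L}(\sigma,\chi_0)\Big)+4\Big(\!-\mathrm{Re}\,\tfrac{L'}{L}(\sigma+i\gamma,\chi)\Big)+\Big(\!-\mathrm{Re}\,\tfrac{L'}{L}(\sigma+2i\gamma,\chi^2)\Big)\ge0 .
\]
Bounding the first term by $\frac3{\sigma-1}+3A$, the last by $A\log\big(q(|\gamma|+2)\big)$ when $\chi^2\ne\chi_0$ and by that plus $\frac1{\sigma-1}$ when $\chi$ is real, and the middle one by $A\log\big(q(|\gamma|+2)\big)-\frac1{\sigma-\beta}$ (discarding all zeros but $\rho$; if $\chi$ is real, also keep the mirror zero $\bar\rho$, contributing a further $-\frac{\sigma-\beta}{(\sigma-\beta)^2+4\gamma^2}$), and then choosing $\sigma=1+\delta/\log\big(q(|\gamma|+2)\big)$ with $\delta$ a suitable small absolute constant, a comparison of sizes forces $\beta<1-c_1/\log\big(q(|\gamma|+2)\big)$ in every case \emph{except} $\gamma=0$ with $\chi$ real. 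Since $\zeta$ has no zero in this region and the removed factors $1-p^{-s}$ vanish only on $\mathrm{Re}\,s=0$, the principal character is also excluded. Finally, applying the same scheme to $\zeta(s)L(s,\chi)$ (whose logarithmic derivative has the nonnegative coefficients $\Lambda(n)(1+\chi(n))$ by \textbf{(c)}) and keeping two real zeros $\beta\le\beta'$ gives $\frac1{\sigma-\beta}+\frac1{\sigma-\beta'}\le\frac1{\sigma-1}+A'\log q$, impossible for $\sigma=1+\delta/\log q$ if both lie in the region; hence such an exceptional zero, when present, is simple.

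\textbf{Step 2: at most one exceptional character (Page's argument).} Suppose two \emph{distinct} real non-principal characters $\chi_1,\chi_2\bmod q$ had real zeros $\beta_1,\beta_2$ in the region. Then $\psi:=\chi_1\chi_2$ is again a real non-principal character modulo $q$, and for $F(s)=\zeta(s)\,L(s,\chi_1)\,L(s,\chi_2)\,L(s,\psi)$ the $n$-th Dirichlet coefficient of $-F'/F$ is $\Lambda(n)$ times $1$ if $n$ shares a factor with $q$ and $(1+\chi_1(n))(1+\chi_2(n))\ge0$ otherwise, so $-\mathrm{Re}\,\frac{F'}{F}(\sigma)\ge0$ for $\sigma>1$. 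On the other hand, the bounds of \textbf{(a)}, keeping only the simple pole of $\zeta$ and the zeros $\beta_1,\beta_2$, give
\[
0\le-\mathrm{Re}\,\frac{F'}{F}(\sigma)\le\frac1{\sigma-1}+A'\log q-\frac1{\sigma-\beta_1}-\frac1{\sigma-\beta_2}.
\]
Taking $\sigma=1+\delta/\log q$ with $\delta=1/(2A')$ and $\beta_1,\beta_2\ge1-c_2/\log q$ forces $\frac{2}{\delta+c_2}\le\frac1\delta+A'=3A'$, which is false once $c_2$ is small enough (the left side tends to $4A'$ as $c_2\to0$). Thus at most one real non-principal character modulo $q$ can have a real zero in the region; combined with Step 1, $\prod_{\chi}L(s,\chi)$ has at most one zero with $\sigma\ge1-c/\log\big(q(|t|+2)\big)$, where $c=\min(c_1,c_2)$, and it is real, simple, and attached to a non-principal real character. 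Because every inequality used is explicit and no appeal to Siegel's theorem is made (only uniqueness, not non-existence, is claimed), $c$ is effectively computable.

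\textbf{Expected main obstacle.} Nothing conceptually deep remains, but the bookkeeping of the absolute constants is delicate: after fixing $\sigma-1\asymp\big(\log q(|t|+2)\big)^{-1}$ one must ensure that the pole contributions $\frac3{\sigma-1}$ (resp.\ $\frac1{\sigma-1}$, $\frac2{\sigma-1}$) are strictly outweighed by the zero contributions $\frac4{\sigma-\beta}$ (resp.\ $\frac2{\sigma-\beta}$), which is precisely why the sharp form of \textbf{(a)} with an \emph{absolute} constant for the $\zeta$-term is indispensable. The only genuinely awkward sub-case is a real character possessing a complex zero extremely close to both the real axis and the point $s=1$: there \textbf{(b)} is not by itself decisive and one must additionally feed in the mirror zero $\bar\rho$, as indicated above — still with no input beyond \textbf{(a)}--\textbf{(c)}.
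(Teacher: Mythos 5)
The paper gives no proof of this proposition at all—it simply cites Montgomery--Vaughan, Theorem~11.3. Your sketch reproduces the standard Landau--Page argument that is, in fact, exactly what the cited reference proves, so this should count as ``essentially the same approach'' even though the paper opted to quote rather than prove. The arithmetic in both steps checks out (the $2/(\delta+c_2)<3A'$ balance in Step~2 is correct), and you correctly flag the one delicate sub-case---a real character with $\chi^2=\chi_0$ and $\gamma$ small, where the $3$--$4$--$1$ balance is $4$-versus-$4$ and one must bring in the conjugate zero $\bar\rho$ to tip it; that is precisely how Montgomery--Vaughan handle it. The only unaddressed technicality is that your partial-fraction estimate in \textbf{(a)} is stated for primitive characters, whereas the proposition ranges over all $\chi\bmod q$; passing to the inducing primitive character only introduces extra Euler factors contributing $O(\log q)$ to $L'/L$, so this is harmless, but it deserves a sentence.
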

\begin{proof}
A proof of this theorem can be found in~\cite[Theorem 11.3]{Montgomery}.
\end{proof}
From now on, if $\beta_1$ lies in the following (even smaller) region
\begin{equation}
    \label{region}
    \sigma \geq 1-\frac{c_0}{2\log \left(q(|t|+2)\right)},
\end{equation}
we call $\beta_1$
the exceptional zero (the Siegel zero) and $\chi_1$ the associated character.
\begin{prop}
\label{thm4}
Let $q\geq 1$.   There is an effectively computable positive constant $c\;(<c_0/2)$, which is independent of $q$, for which in the region 
$$\sigma \geq 1-\frac{c}{\log q(|t|+2)}\geq \frac{3}{4}$$
the following estimates hold:
\begin{align}
& \frac{L^{\prime}(s, \chi)}{L(s, \chi)}=\Ocal \left( \log q(|t|+2)\right), \quad \chi\neq \chi_0, \chi_1, \label{pro31}\\
& \frac{L^{\prime}(s, \chi_0)}{L(s, \chi_0)}=-\frac{1}{s-1}+\Ocal \left( \log q(|t|+2)\right), \label{pro32}\\
 &\frac{L^{\prime}(s, \chi_1)}{L(s, \chi_1)}=\frac{1}{s-\beta_1}+\Ocal \left( \log q(|t|+2)\right). \label{pro33}
 \end{align}
\end{prop}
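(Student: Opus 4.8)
The plan is to obtain all three estimates from the Hadamard partial-fraction expansion of $L'/L$ together with the zero-free region of Proposition~\ref{thm3}. First I would reduce to primitive characters: if $\chi$ is induced by a primitive character $\chi^{\ast}$ modulo $q^{\ast}\mid q$, then $L'/L(s,\chi)=L'/L(s,\chi^{\ast})+\sum_{p\mid q}(\log p)\chi^{\ast}(p)p^{-s}(1-\chi^{\ast}(p)p^{-s})^{-1}$, the finite sum being $\Ocal(\log q)$ for $\sigma\geq 3/4$; in particular $L'/L(s,\chi_{0})=\zeta'/\zeta(s)+\Ocal(\log q)$. For a primitive character $\chi$ modulo $q>1$ one has, with $a\in\{0,1\}$ and $\operatorname{Re}B(\chi)=-\sum_{\rho}\operatorname{Re}(1/\rho)$,
\[
\frac{L'}{L}(s,\chi)=B(\chi)-\tfrac12\log\frac{q}{\pi}-\tfrac12\frac{\Gamma'}{\Gamma}\Big(\frac{s+a}{2}\Big)+\sum_{\rho}\Big(\frac{1}{s-\rho}+\frac{1}{\rho}\Big),
\]
the sum over the non-trivial zeros $\rho=\beta+i\gamma$. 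Subtracting this from the same identity at $s_{0}=2+it$, where $L'/L(s_{0},\chi)=\Ocal(1)$, removes $B(\chi)$ and $\log(q/\pi)$; Stirling bounds the $\Gamma'/\Gamma$-difference by $\Ocal(\log(|t|+2))$, the zeros with $|\gamma-t|>1$ give $\Ocal(\log q(|t|+2))$ by partial summation against the count $\#\{\rho:n\leq|\gamma-t|<n+1\}\ll\log q(|t|+n+2)$ (the real part of the displayed identity at $s_{0}$, using $\operatorname{Re}B(\chi)=-\sum_\rho\operatorname{Re}(1/\rho)$), and the $\ll\log q(|t|+2)$ zeros with $|\gamma-t|\leq1$ contribute $\Ocal(\log q(|t|+2))$ through their $1/(s_{0}-\rho)$. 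Hence $L'/L(s,\chi)=\sum_{|\gamma-t|\leq1}(s-\rho)^{-1}+\Ocal(\log q(|t|+2))$ for $-1\leq\sigma\leq2$.

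Next I would feed in Proposition~\ref{thm3}, fixing the constant $c$ of the present statement to be strictly smaller than the zero-free-region constant (absorbing into the implied constant the harmless gap between $\log q(|t|+2)$ and $\log q(|\gamma|+2)$ for $|\gamma-t|\leq 1$). If $\chi\neq\chi_{0},\chi_{1}$, every zero occurring in the localized sum has $\sigma-\beta\gg 1/\log q(|t|+2)$ uniformly in $\sigma\geq 1-c/\log q(|t|+2)$, and the localized sum is $\Ocal(\log q(|t|+2))$, which is~\eqref{pro31}. Running the same argument for $\zeta$ one keeps only the pole $s=1$ in place of a nearby zero, so $\zeta'/\zeta(s)=-(s-1)^{-1}+\Ocal(\log(|t|+2))$, and combining with the prime-factor correction at $p\mid q$ gives~\eqref{pro32}. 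For $\chi=\chi_{1}$, Proposition~\ref{thm3} guarantees that the Siegel zero $\beta_{1}$ (real, simple, with $\gamma=0$) is the \emph{only} zero of $L(\cdot,\chi_{1})$ in that region; when $|t|\leq1$ it is the unique term of the localized sum not controlled as in the non-exceptional case, so isolating it yields the term $(s-\beta_{1})^{-1}$ of~\eqref{pro33} (and when $|t|>1$ one simply has $(s-\beta_{1})^{-1}=\Ocal(1)$, so~\eqref{pro33} still reads correctly).

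The step I expect to be the main obstacle is the bound $\sum_{|\gamma-t|\leq1}(s-\rho)^{-1}=\Ocal(\log q(|t|+2))$ for $s$ strictly inside the zero-free region: each such zero sits at distance $\gg 1/\log q(|t|+2)$ from $s$, yet there may be $\gg\log q(|t|+2)$ of them, so a term-by-term estimate is lossy by a factor $\log q(|t|+2)$, and recovering the asserted power of $\log$ forces one to exploit the shape of the zero-free region rather than only the non-vanishing at $s$ itself. The natural route is a Borel--Carath\'eodory estimate for $\log L(s,\chi)$ on a disc centred near $s$ and contained in the zero-free region, fed by the classical upper and lower bounds for $|L(s,\chi)|$ there (de la Vall\'ee Poussin, Landau), followed by Cauchy's inequality for $(\log L)'=L'/L$. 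The remaining delicate point is the bookkeeping around the exceptional character: one must peel off \emph{exactly} the term coming from $\beta_{1}$, and this is precisely what the ``at most one zero, real and simple'' conclusion of Proposition~\ref{thm3} provides.
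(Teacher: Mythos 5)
The paper gives no proof here at all: Proposition~\ref{thm4} is in the section ``Results from elsewhere'' and is justified only by the citation to Prachar, \emph{Primzahlverteilung}, Satz~7.1, so there is no in-paper argument to compare against and the question is simply whether your sketch is sound. Your opening is fine: reduction to the primitive character, the Hadamard partial-fraction identity, subtraction at $s_0=2+it$, the annular zero-count for $|\gamma-t|>1$, and the resulting localised identity $L'/L(s,\chi)=\sum_{|\gamma-t|\leq 1}(s-\rho)^{-1}+\Ocal(\log q(|t|+2))$ on a strip around $\sigma=1$ are all correct, and you are right to identify the remaining step (bounding the localised sum by a \emph{single} power of $\log$ inside the zero-free region) as the crux.

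The patch you propose for that crux, however, has a real gap. A Borel--Carath\'eodory estimate for $\log L(s,\chi)$ on a disc centred at a point $s$ inside the zero-free region needs a lower bound on $|L|$ throughout that disc, and inside the strip $\sigma<1$ the only known lower bound $|L(\sigma+it,\chi)|\gg (q(|t|+2))^{-A}$ is obtained precisely by integrating the estimate $L'/L\ll\log q(|t|+2)$ from $2+it$ down to $s$ -- i.e.\ by invoking the very statement you are proving. The de la Vall\'ee Poussin lower bound lives on $\sigma\geq 1$ and does not by itself propagate into the strip. The classical way to close the gap is a comparison at $1+\delta+it$ with $\delta=1/\log q(|t|+2)$, exploiting positivity rather than a lower bound for $|L|$. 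For $\sigma\geq 1+\delta$ the bound is trivial from $|L'/L|\leq -\zeta'/\zeta(1+\delta)\ll\delta^{-1}$. Otherwise write, from your localised identity,
\[
\frac{L'}{L}(s,\chi)-\frac{L'}{L}(1+\delta+it,\chi)=\sum_{|\gamma-t|\leq 1}\left(\frac{1}{s-\rho}-\frac{1}{1+\delta+it-\rho}\right)+\Ocal(\log q(|t|+2)).
\]
Every summand of $\sum_{|\gamma-t|\leq 1}\operatorname{Re}\bigl((1+\delta+it-\rho)^{-1}\bigr)$ is positive, and the whole sum equals $-\operatorname{Re}(L'/L)(1+\delta+it,\chi)+\Ocal(\log q(|t|+2))\ll\log q(|t|+2)$. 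For $s$ with $\sigma\geq 1-c_1/\log q(|t|+2)$, $c_1$ smaller than the zero-free constant, each non-exceptional $\rho$ in the localised sum has $|s-\rho|\asymp|1+\delta+it-\rho|\gg 1/\log q(|t|+2)$ and $|1+\delta-\sigma|\ll 1+\delta-\beta$, so
\[
\left|\frac{1}{s-\rho}-\frac{1}{1+\delta+it-\rho}\right|=\frac{|1+\delta-\sigma|}{|s-\rho|\,|1+\delta+it-\rho|}\ll\frac{1+\delta-\beta}{|1+\delta+it-\rho|^{2}}=\operatorname{Re}\frac{1}{1+\delta+it-\rho},
\]
and summing gives the claimed $\Ocal(\log q(|t|+2))$. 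Peeling off the pole at $1$ (for $\chi_0$) and the Siegel zero $\beta_1$ (for $\chi_1$) from the localised sum before comparing, exactly as you describe, then yields \eqref{pro32} and \eqref{pro33}. So the architecture of your proof is right; the missing idea is the positivity/comparison device replacing the circular Borel--Carath\'eodory step.
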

\begin{proof}
A proof of this theorem can be found in~\cite[Kapitel IV, Satz 7.1]{Prachar}.
\end{proof}
\section{Auxiliary lemmas}
\label{Sec3}
\begin{lem}
\label{lem4}
For any integer $m$ and $k\geq 1$, we have
\begin{equation}
\label{eq01}
\sum_{m_1 m_2 \cdots m_k=m}\Lambda(m_1)\cdots \Lambda(m_k)\leq (\log m)^k.
\end{equation}
\end{lem}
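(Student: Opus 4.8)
The plan is to prove the bound $\sum_{m_1 m_2 \cdots m_k = m} \Lambda(m_1)\cdots\Lambda(m_k) \le (\log m)^k$ by induction on $k$, using the classical identity $\sum_{d \mid m} \Lambda(d) = \log m$ as the base case and the engine of the induction step.

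For the base case $k=1$, the sum is exactly $\sum_{d\mid m}\Lambda(d) = \log m$, which holds with equality. For the inductive step, suppose the bound holds for $k-1$. Writing $m = d \cdot n$ where $d = m_1$ ranges over divisors of $m$ and $n = m_2\cdots m_k$, I would split
\begin{equation*}
\sum_{m_1 m_2 \cdots m_k = m}\Lambda(m_1)\cdots\Lambda(m_k)
= \sum_{d \mid m}\Lambda(d)\sum_{m_2\cdots m_k = m/d}\Lambda(m_2)\cdots\Lambda(m_k)
\le \sum_{d\mid m}\Lambda(d)\left(\log\frac{m}{d}\right)^{k-1}.
\end{equation*}
Since $\log(m/d) \le \log m$ for every divisor $d$ of $m$ (as $d\ge 1$), and since $\Lambda(d)\ge 0$, the last sum is at most $(\log m)^{k-1}\sum_{d\mid m}\Lambda(d) = (\log m)^{k-1}\log m = (\log m)^k$, completing the induction.

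A couple of small points need care but are routine. For $m=1$ the sum is empty (or equals $\Lambda(1)^k = 0$), and $(\log 1)^k = 0$, so the inequality holds trivially; one should just note that all logarithms appearing are nonnegative since $m\ge 1$. The only genuinely substantive ingredient is the identity $\sum_{d\mid m}\Lambda(d)=\log m$, which is standard (it follows from unique factorization: if $m = \prod p^{a_p}$ then $\sum_{d\mid m}\Lambda(d) = \sum_p a_p \log p = \log m$). I do not anticipate any real obstacle here; the statement is elementary, and the induction is clean. The main thing to get right is simply the bookkeeping of which variable plays the role of the "first" factor and ensuring the monotonicity $\log(m/d)\le\log m$ is applied only where the weights $\Lambda(d)$ are nonnegative, which they always are.
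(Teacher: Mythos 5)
Your proof is correct and follows essentially the same route as the paper's: induction on $k$, peeling off the first factor $m_1$, applying the induction hypothesis to the inner sum, and finishing with the identity $\sum_{d\mid m}\Lambda(d)=\log m$. You spell out the final bounding step $\log(m/d)\le\log m$ a bit more explicitly than the paper, but the argument is the same.
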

\begin{proof}
We prove this lemma by induction on $k$. For $k=1$, it is clear. In order to show that Eq.~\eqref{eq01} is valid for $k=2$, we write
\begin{equation*}
\sum_{m_1 m_2 = m}\Lambda(m_1)\Lambda(m_2)
\le \log m \sum_{ m_2| m}\Lambda(m_2) \leq (\log m)^2.
\end{equation*}
Now, we assume that Eq.~\eqref{eq01} is valid for any fixed and non-negative integer $\ell$ such that $1\leq \ell\leq k-1$. Then we have to prove that it is also valid for $k$. By induction hypothesis, we have   
\begin{eqnarray*}
\sum_{m_1 m_2 \cdots m_k =m}\Lambda(m_1)\cdots \Lambda(m_k)
&=& 
\sum_{m_1n=m}\Lambda(m_1)\sum_{m_2 m_3 \cdots m_k=n}\Lambda(m_2)\cdots \Lambda(m_k)
\\&\leq &
\sum_{m_1n=m}\Lambda(m_1) \log^{k-1}n \leq (\log m)^k.
\end{eqnarray*}
We conclude from the above that Eq.~\eqref{eq01} is valid for $k$. Then it is valid for all $k\geq 1$. The lemma is therefore proved.
\end{proof}
\begin{lem}
\label{lem6}
For any real number $t_0\neq 0$, the Taylor expansion of $(\zeta'/\zeta)(s)$ at $s_0=1+2it_0$ is given by 
\begin{equation}
\label{eq04}
 \frac{\zeta^{'}(s)}{\zeta(s)}=\sum_{n=0}^{\infty}C_{n,s_0} (s-s_0)^n,
\end{equation}
where
\begin{equation}
\label{eq05}
C_{n,s_0}= \Ocal\left(\frac{1}{|t_0|^{n+1}}+\left(\log (|t_0|+2) \right)^{n+1}\right).
\end{equation} 
\end{lem}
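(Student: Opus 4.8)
The plan is to obtain the Taylor coefficients $C_{n,s_0}$ of $(\zeta'/\zeta)(s)$ at $s_0 = 1+2it_0$ via Cauchy's integral formula applied to a suitable circle, and to bound the resulting contour integral using the known growth of $\zeta'/\zeta$ in the zero-free region near the line $\sigma = 1$. First I would recall the classical zero-free region: there is an absolute constant $c>0$ such that $\zeta(s) \neq 0$ for $\sigma \geq 1 - c/\log(|t|+2)$, and in that region one has the standard bound $(\zeta'/\zeta)(s) = O(\log(|t|+2))$ away from the pole at $s=1$ — this is exactly the analogue, for $q=1$, of the estimate in Proposition~\ref{thm4}. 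Since $t_0 \neq 0$, the point $s_0 = 1+2it_0$ sits at a fixed positive distance $2|t_0|$ from the pole $s=1$, so $\zeta'/\zeta$ is holomorphic in a neighborhood of $s_0$ and a Taylor expansion of the form~\eqref{eq04} genuinely exists.

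Next I would choose the radius of the Cauchy circle. Set $\rho = \tfrac{1}{2}\min\{|t_0|,\, c/\log(|t_0|+2)\}$ (any comparable choice works); the point is that the closed disk $|s - s_0| \leq \rho$ avoids both the pole at $s=1$ and all zeros of $\zeta$, because on this disk $\sigma \geq 1 - \rho \geq 1 - c/\log(|t_0|+2)$ and the imaginary part stays within a bounded distance of $2t_0$, so the zero-free region applies. By Cauchy's formula,
\begin{equation*}
C_{n,s_0} = \frac{1}{2\pi i}\oint_{|s-s_0|=\rho} \frac{\zeta'(s)/\zeta(s)}{(s-s_0)^{n+1}}\,ds,
\end{equation*}
whence $|C_{n,s_0}| \leq \rho^{-n}\, \max_{|s-s_0|=\rho}|(\zeta'/\zeta)(s)|$. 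On that circle the bound $(\zeta'/\zeta)(s) = O(\log(|t_0|+2))$ holds, so $|C_{n,s_0}| = O\bigl(\rho^{-n}\log(|t_0|+2)\bigr)$.

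Finally I would unwind the choice of $\rho$ into the stated shape~\eqref{eq05}. Since $\rho^{-1} = O\bigl(|t_0|^{-1} + \log(|t_0|+2)\bigr) \leq O\bigl(\max\{|t_0|^{-1}, \log(|t_0|+2)\}\bigr)$, raising to the $n$-th power and absorbing the extra factor $\log(|t_0|+2)$ gives
\begin{equation*}
|C_{n,s_0}| = O\!\left(\frac{1}{|t_0|^{n+1}} + \bigl(\log(|t_0|+2)\bigr)^{n+1}\right),
\end{equation*}
after noting that $x^n(\,x+y\,) \le x^{n+1} + y^{n+1}$ up to a constant when $x,y>0$ (handle separately the regimes $|t_0|$ small and $|t_0|$ large, where one of the two terms dominates). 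The only mildly delicate point is bookkeeping the dependence on $|t_0|$ uniformly across all ranges — in particular making sure the radius $\rho$ is taken small enough that the disk stays inside the zero-free region even when $|t_0|$ is large (so $c/\log(|t_0|+2)$ is the binding constraint) and also stays clear of the pole when $|t_0|$ is small (so $|t_0|$ is the binding constraint). This is the main obstacle, but it is routine once the two regimes are separated.
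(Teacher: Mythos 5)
Your overall strategy—shrinking the Cauchy radius to $\rho = \frac{1}{2}\min\{|t_0|,\ c/\log(|t_0|+2)\}$ so that the pole at $s=1$ always stays outside the disk—is a genuine alternative to the paper's argument, which instead fixes $R = c/(2\log(|t_0|+2))$ regardless of the size of $|t_0|$ and, when $|t_0|$ is small enough that $s=1$ falls inside the circle, explicitly adds the residue of $(\zeta'/\zeta)(s)(s-s_0)^{-n-1}$ at $s=1$, which is where the $O(|t_0|^{-n-1})$ term comes from. However, there is a real error in your circle estimate: the claim that $(\zeta'/\zeta)(s) = O(\log(|t_0|+2))$ holds on $|s-s_0|=\rho$ is false when $|t_0|$ is small. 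Writing $s = s_0 + \rho e^{i\theta}$, one has $|s-1| = |2it_0 + \rho e^{i\theta}|$, which is comparable to $|t_0|$; the correct estimate from $(\zeta'/\zeta)(s) = -\frac{1}{s-1} + O(\log(|t|+2))$ (the $q=1$ case of Eq.~\eqref{pro32}) is therefore
\begin{equation*}
\frac{\zeta'(s)}{\zeta(s)} = O\!\left(\frac{1}{|t_0|} + \log(|t_0|+2)\right)
\end{equation*}
on the circle, not $O(\log(|t_0|+2))$. Near the pole the $1/(s-1)$ term dominates; the bound $O(\log(|t_0|+2))$ is only legitimate when the contour stays at bounded distance from $s=1$, which your shrinking radius does not guarantee. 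Note that carrying your incorrect bound forward even gives $O(|t_0|^{-n})$ in the small-$|t_0|$ regime, a claim strictly stronger than the lemma and not actually true.

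Once the bound is repaired, your argument does close up cleanly. From $|C_{n,s_0}| \leq \rho^{-n}\max_{|s-s_0|=\rho}\left|\zeta'(s)/\zeta(s)\right|$, the two regimes separate as you anticipated: for $|t_0|$ small, $\rho \asymp |t_0|$ and the maximum is $\asymp |t_0|^{-1}$, giving $O(|t_0|^{-n-1})$; for $|t_0|$ large, $\rho \asymp 1/\log(|t_0|+2)$ and the maximum is $\asymp \log(|t_0|+2)$, giving $O((\log(|t_0|+2))^{n+1})$, which together reproduce \eqref{eq05}. The trade-off against the paper's proof is that your route avoids any residue computation at $s=1$, at the price of a $|t_0|$-dependent radius and more careful bookkeeping on the circle; the paper's fixed radius keeps the contour estimate uniform in $|t_0|$ but requires splitting off the pole's residue by hand.
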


\begin{proof}
It is well known that $\zeta(1+it_0)\neq 0$ for every real $t_0\neq 0$, see \cite[Theorem 13.6]{Aps}. Then, the Taylor expansion of $(\zeta'/\zeta)(s)$ at $s_0$ is given by 
\begin{equation*}
\frac{\zeta^{\prime}(s)}{\zeta(s)}=\sum_{n\geq 0}C_{n,s_0} (s-s_0)^n,  
\end{equation*}
where the coefficients $C_{n,s_0}$ are defined by the following residue:
\begin{equation*}
C_{n,s_0}=\res \left(\frac{\zeta^{\prime}(s)}{\zeta(s)}\frac{1}{(s-s_0)^{n+1}}; s_0\right).
\end{equation*}
In order to calculate the residue above, we consider the contour $\mathcal{C}$ 
which is a positively oriented circle of radius $R$ and center $s_0$. 
Proposition \ref{thm3} for $q=1$ gives the classical zero-free region for the 
Riemann zeta-function 
$$\sigma\geq 1-\frac{c_0}{\log (|t|+2)}.$$
We choose $R=c_0/\left(2\log (|t_0|+2)\right)$. Write $s\in \mathcal{C}$ as 
$s=s_0+Re^{i\theta}$, with $0\leq \theta\leq 2\pi$. Here we notice that, when $|t_0|$ is very small, the point $s=1$ may be inside the circle $\mathcal{C}.$ If not, we have 
\begin{equation*}
    C_{n, s_0}=\frac{1}{2\pi i}\int_{\mathcal{C}}\frac{\zeta'(s)}{\zeta(s)}\frac{ds}{(s-s_0)^{n+1}}.
\end{equation*}
Using Eq.~\eqref{pro32}, the integral on the right-hand side is
\begin{eqnarray*}
&=&\frac{1}{2\pi i}\int_{\mathcal{C}}\left( -\frac{1}{s-1}+\Ocal\left(\log(|t|+2)\right)\right)\, \frac{ds}{(s-s_0)^{n+1}}
\\&=&
\Ocal\left( (\log (|t_0|+2))^{n+1}\right).
\end{eqnarray*}
On the other hand, if $s=1$ is inside $\mathcal{C}$, we have
\begin{equation*}
 C_{n, s_0}+\res \left( \frac{\zeta'(s)}{\zeta(s)}\frac{1}{(s-s_0)^{n+1}}; 1\right)=\frac{1}{2\pi i}\int_{\mathcal{C}}\frac{\zeta'(s)}{\zeta(s)}\frac{ds}{(s-s_0)^{n+1}}.
\end{equation*}
It is easy to check that 
\begin{eqnarray*}
\res \left( \frac{\zeta'(s)}{\zeta(s)}\frac{1}{(s-s_0)^{n+1}}; 1\right)&=&\lim_{s\rightarrow 1}\left[(s-1)\frac{\zeta'(s)}{\zeta(s)}\frac{1}{(s-s_0)^{n+1}}\right]
\\&=& \Ocal\left( |t_0|^{-n-1}\right),
\end{eqnarray*}
while the integral term is $\Ocal \left(\left(\log (|t_0|+2) \right)^{n+1}\right)$ (because $|s-1|=|s_0+Re^{i\theta}-1|=|2it_0+Re^{i\theta}|\asymp R\asymp 1$ when $|t_0|$ is small). Lastly, when $s=1$ is on the circle $\mathcal{C}$, we modify $\mathcal{C}$ slightly to obtain the same result. This completes the proof. 
\end{proof}

It is known that the Laurent expansion of the Riemann zeta-function at $s=1$ is given by 
\begin{equation}\label{Laurent}
\zeta(s)=\frac{1}{s-1}+\sum_{n\geq 0}\gamma_n (s-1)^n,
\end{equation}
where $\gamma_n$ are called the Stieltjes constants. 
\begin{lem}
\label{lem7}
We have 
\begin{equation}
\label{eq08}
(s-1)\frac{\zeta'(s)}{\zeta(s)}=\sum_{n\geq 0}E_{n} (s-1)^{n},
\end{equation}
where $E_{0}=-1$ and 
\begin{equation}
\label{eq09}
E_{n}=(n-1)\gamma_{n-1}-\sum_{k=1}^{n}\gamma_{k-1}E_{n-k}\qquad (n\geq 1).
\end{equation}
\end{lem}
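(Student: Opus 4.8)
The plan is to reduce \eqref{eq08}--\eqref{eq09} to a comparison of Taylor coefficients at $s=1$ of two power series, the bridge between them being multiplication by the holomorphic, non-vanishing factor $(s-1)\zeta(s)$.

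First I would record the Laurent expansion of the zeta function about its simple pole at $s=1$, which is the defining relation for the Stieltjes constants $\gamma_n$ (so $\gamma_0=\gamma$ is Euler's constant):
\begin{equation*}
\zeta(s)=\frac{1}{s-1}+\sum_{n\ge 0}\gamma_n(s-1)^n .
\end{equation*}
Hence $(s-1)\zeta(s)=1+\sum_{n\ge 1}\gamma_{n-1}(s-1)^n$ is holomorphic near $s=1$ and equals $1$ there; in particular $\zeta'/\zeta$ has at most a simple pole at $s=1$, the function $(s-1)(\zeta'/\zeta)(s)$ is holomorphic there, and the expansion \eqref{eq08} is legitimate in a disc about $s=1$ (its radius limited only by the nearest zero of $\zeta$). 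Differentiating the Laurent expansion and multiplying by $(s-1)^2$ yields
\begin{equation*}
(s-1)^2\zeta'(s)=-1+\sum_{n\ge 1}(n-1)\gamma_{n-1}(s-1)^n .
\end{equation*}

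Next I would write $(s-1)(\zeta'/\zeta)(s)=\sum_{n\ge 0}E_n(s-1)^n$ and multiply through by $(s-1)\zeta(s)$. Since $(s-1)\zeta(s)\cdot(s-1)(\zeta'/\zeta)(s)=(s-1)^2\zeta'(s)$, this gives the power-series identity
\begin{equation*}
-1+\sum_{n\ge 1}(n-1)\gamma_{n-1}(s-1)^n=\Bigl(1+\sum_{m\ge 1}\gamma_{m-1}(s-1)^m\Bigr)\Bigl(\sum_{n\ge 0}E_n(s-1)^n\Bigr),
\end{equation*}
valid near $s=1$. Comparing constant terms gives $E_0=-1$ at once. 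Comparing the coefficient of $(s-1)^n$ for $n\ge 1$ — the left-hand value being $(n-1)\gamma_{n-1}$, which is also correct at $n=1$ since it then vanishes — gives $(n-1)\gamma_{n-1}=E_n+\sum_{k=1}^n\gamma_{k-1}E_{n-k}$, and isolating $E_n$ produces exactly \eqref{eq09}.

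There is no genuine obstacle here; the only points needing care are the justification that both sides converge in a common neighbourhood of $s=1$ — immediate from the meromorphy of $\zeta$ together with $(s-1)\zeta(s)\big|_{s=1}=1\ne 0$ — and the routine index bookkeeping in the Cauchy product. The sole external input is the classical Laurent expansion of $\zeta(s)$ at $s=1$.
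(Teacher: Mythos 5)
Your proof is correct and follows essentially the same route as the paper: both start from the Laurent expansion of $\zeta$ at $s=1$, reduce to the power series $1+\sum_{m\geq 1}\gamma_{m-1}(s-1)^m$ and $-1+\sum_{n\geq 1}(n-1)\gamma_{n-1}(s-1)^n$, and extract the recurrence for $E_n$. The only cosmetic difference is that the paper presents $(s-1)\zeta'/\zeta$ as a formal quotient of these two series and cites division of power series, while you clear denominators and compare Cauchy-product coefficients, which makes the recurrence (and its justification near $s=1$) more explicit.
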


\begin{proof}
Differentiating the both sides of \eqref{Laurent}, we have 
\begin{equation*}
\zeta^{\prime}(s)=\frac{-1}{(s-1)^2}+\sum_{n\geq 0}n\gamma_n (s-1)^{n-1}.
\end{equation*}
By making a change of variable and using properties of power series, we find that 
\begin{eqnarray*}
(s-1)\frac{\zeta'(s)}{\zeta(s)}&=&\frac{-1+\sum_{n\geq 0}n \gamma_n (s-1)^{n+1}}{1+\sum_{n\geq 0}\gamma_n (s-1)^{n+1}}
\\&=&
\frac{\sum_{n\geq 0}(n-1)\gamma_{n-1}(s-1)^n}{\sum_{n\geq 0}\gamma_{n-1}(s-1)^n}
\\&=&
\sum_{n\geq 0}E_{n}(s-1)^n,
\end{eqnarray*}
where $ \gamma_{-1}=1$, $E_{0}=-1$ and
$$ E_{n}=(n-1)\gamma_{n-1}-\sum_{k=1}^{n}\gamma_{k-1}E_{n-k}\quad  (n\geq 1).$$
This implies the desired result. 
\end{proof}
\begin{lem}
\label{lem5}
Let $t_0$ be a fixed real number, $p$ be a prime number, and let $a\in\mathbb{C}$
with $\Re a=1$.   The Taylor expansion of the function $\sum_{p|q}(\log p)/\left(p^{s+a}-1\right)$  at the origin is 
\begin{equation}
\label{eq02}
\sum_{p|q}\frac{\log p}{p^{s+a}-1}= \sum_{n=0}^{\infty} {F}_{n,a}s^n,
\qquad F_{n,a}=\Ocal_n(Q).
\end{equation}
\end{lem}

\begin{proof} 
The Taylor expansion of  $(\log p)/\left(p^{s+a}-1\right)$ at the origin is given by 
$$\frac{\log p}{p^{s+a}-1}= \sum_{n\geq 0}F_{n, a}(p)s^n. $$ 
where   
$$ F_{n,a}(p)=\frac{1}{2\pi i} \int_{\mathcal{C}}\frac{\log p}{(p^{s+a}-1)}\frac{ds}{s^{n+1}}.$$
Here, the contour $\mathcal{C}$ is a positively oriented circle of radius $R=1/2$ and centered at the origin. Taking $s=Re^{i\theta}$, where $0\leq \theta\leq 2\pi$, it is easily seen 
(because of the confition $\Re a=1$) that  
\begin{equation*}
F_{n,a}(p)\ll \frac{2^n\log p}{p^{1/2}}.
 \end{equation*}
 Note that the implied constant here is independent of $a$.
 Therefore, we have 
 \begin{equation*}
\sum_{p|q}F_{n,a}(p)\ll_n \sum_{p|q}\frac{\log p}{p^{1/2}}\ll \log q \sum_{p|q}1.
 \end{equation*}
Notice that the latter sum is $\omega(q)$,i.e., the number of distinct prime divisors of $q$.
Using the fact 
${\omega(q)}\ll \log q/\log \log q$ ( see\cite[Theorem 2.10]{Montgomery}),  
we get 
 \begin{equation*}
\sum_{p|q}F_{n,a}(p)=\Ocal_n\left( \frac{(\log q)^2}{\log \log q}\right).
 \end{equation*}
This completes the proof. 
\end{proof}
\begin{lem}
\label{lem8}
Let $\beta_1$ be the Siegel zero corresponding to $\chi_1$.
Then, we have 
\begin{equation*}
\frac{L^{\prime}(s+\beta_1, \chi_1)}{L(s+\beta_1, \chi_1)}=\frac{1}{s}+\sum_{n\geq 0}P_n s^n, 
\qquad  P_n=\Ocal\left((\log q)^{n+1} \right).
\end{equation*}
\end{lem}
\begin{proof}
The Laurent expansion of $(L^{\prime}/L)(s, \chi_1)$ at the point $\beta_1$ is given by 
\begin{equation*}
\frac{L^{\prime}(s, \chi_1)}{L(s, \chi_1)}=\frac{1}{s-\beta_1}+\sum_{n\geq 0}P_n (s-\beta_1)^n,  
\end{equation*}
where the coefficients $P_n$ are defined by 
\begin{equation*}
P_n=\frac{1}{2\pi i}\int_{\mathcal{C}}\frac{L^{\prime}(s, \chi_1)}{L(s, \chi_1)}\frac{ds}{(s-\beta_1)^{n+1}}.
\end{equation*}
Here the contour $\mathcal{C}$ is a positively oriented circle of radius $R=c_2/\log(2q)$ and centered at $\beta_1$, where $ c_2<c_0/2$ is sufficiently small.   We see that the function $(L^{\prime}/L)(s, \chi_1)$ has at most one pole at $s=\beta_1$ that lies inside the circle. Let $s=\beta_1+Re^{i\theta}$ where $ 0\leq \theta\leq 2\pi$. Using Eq.~\eqref{pro33}, we get 
\begin{equation*}
P_n=\frac{1}{2\pi }\int_{0}^{2\pi}\left( \frac{1}{Re^{i\theta}}+\Ocal\left(\log 2q\right)\right)\frac{d\theta}{(Re^{i\theta})^n}
 =
\Ocal\left((\log q)^{n+1}\right).
\end{equation*}
This completes the proof. 
\end{proof}
\begin{lem}
\label{lem9}
Let  $t_0$ be a non-zero real number and let $\beta_1$ be the Siegel zero in the region given by Eq.~\eqref{region} corresponding to a non-principal real character $\chi_1$. Then, the Taylor expansion of the function $(L^{\prime}/L)(s+it_0, \chi_1)$ at the point $s_0=\beta_1+it_0$ is given by 
\begin{equation*}
\frac{L^{\prime}(s, \chi_1)}{L(s, \chi_1)}=\sum_{n\geq 0}Q_n (s-s_0)^n,  
\end{equation*}
where 
\begin{equation*}
Q_n=\Ocal\left((\log q(|t_0|+2))^{n+1}+\frac{1}{|t_0|^{n+1}} \right).
\end{equation*}
\end{lem}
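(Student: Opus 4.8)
The plan is to reproduce the Cauchy-coefficient template already used for Lemmas~\ref{lem6} and \ref{lem8}, isolating the pole of $(L'/L)(s,\chi_1)$ at the Siegel zero. Since $\chi_1$ is real and $t_0\neq 0$, the centre $s_0=\beta_1+it_0$ is \emph{not} the Siegel zero, so the situation is exactly that of Lemma~\ref{lem6}: the pole may lie inside or outside the contour depending on the size of $|t_0|$, and both cases must be covered.

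First I would peel off the pole. By Proposition~\ref{thm3} the zero $\beta_1$ is simple, so $(L'/L)(s,\chi_1)$ has a simple pole there with residue $1$, and
$$\frac{L'(s,\chi_1)}{L(s,\chi_1)}=\frac{1}{s-\beta_1}+g(s),\qquad g(s):=\frac{L'(s,\chi_1)}{L(s,\chi_1)}-\frac{1}{s-\beta_1},$$
where $g$ is holomorphic throughout the region~\eqref{region} and, by Eq.~\eqref{pro33}, satisfies $g(s)=\Ocal(\log q(|t|+2))$ there. Expanding the first summand at $s_0$ is purely algebraic: $(s-\beta_1)^{-1}=((s-s_0)+it_0)^{-1}=\sum_{n\geq 0}(-1)^n(it_0)^{-n-1}(s-s_0)^n$ for $|s-s_0|<|t_0|$, so it contributes $\Ocal(|t_0|^{-n-1})$ to $Q_n$. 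For $g$ I would take $\mathcal{C}$ to be the positively oriented circle $s=s_0+Re^{i\theta}$, $0\leq\theta\leq 2\pi$, of radius $R=c_2/\log(q(|t_0|+2))$ with $c_2<c/2$ chosen small enough that the closed disk bounded by $\mathcal{C}$ lies inside the region where Eq.~\eqref{pro33} holds (so that $\mathrm{Re}(s)\geq 3/4$ there and $\log q(|t|+2)\asymp\log q(|t_0|+2)$ on $\mathcal{C}$, the disk having bounded diameter). The Cauchy coefficient formula then gives
$$\frac{1}{2\pi i}\int_{\mathcal{C}}\frac{g(s)}{(s-s_0)^{n+1}}\,ds\ \ll\ \frac{\sup_{\mathcal{C}}|g|}{R^{n}}\ \ll\ \frac{\log q(|t_0|+2)}{R^{n}}\ \ll\ \bigl(\log q(|t_0|+2)\bigr)^{n+1}.$$
Adding the two contributions yields $Q_n=(-1)^n(it_0)^{-n-1}+\Ocal\!\bigl((\log q(|t_0|+2))^{n+1}\bigr)=\Ocal\!\bigl(|t_0|^{-n-1}+(\log q(|t_0|+2))^{n+1}\bigr)$, which is the assertion. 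When $|t_0|$ is so small that $\beta_1$ lies inside $\mathcal{C}$ nothing changes, since the singular part $1/(s-\beta_1)$ has already been removed from $g$, exactly as the term $1/(s-1)$ was handled in the proof of Lemma~\ref{lem6}.

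The one genuinely delicate point is the calibration of $R$: it must be small enough for the disk around $s_0$ to stay inside the narrow zero-free region furnished by Propositions~\ref{thm3} and~\ref{thm4} — so that Eq.~\eqref{pro33} legitimately applies and $g$ is holomorphic and $\Ocal(\log q(|t|+2))$ on it — yet of order $1/\log(q(|t_0|+2))$, so that the $R^{-n}$ loss in the Cauchy bound produces precisely the claimed power $(\log q(|t_0|+2))^{n+1}$ rather than something larger. Everything else is a routine repetition of the residue estimates already carried out in Lemmas~\ref{lem6} and \ref{lem8}.
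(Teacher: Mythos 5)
Your proof is correct and follows essentially the same contour-integral approach as the paper's: both choose a circle of radius $R\asymp 1/\log(q(|t_0|+2))$ about $s_0$, extract an $O(|t_0|^{-n-1})$ contribution from the Siegel pole, and obtain the $(\log q(|t_0|+2))^{n+1}$ term from the Cauchy estimate. The only organizational difference is that you peel off the singular part $1/(s-\beta_1)$ \emph{before} applying Cauchy's formula (expanding it as a geometric series in $(s-s_0)/(it_0)$), whereas the paper applies the residue theorem to the full $L'/L$ on $\mathcal{C}$ and subtracts the residue at $\beta_1$; your reorganization is arguably a touch cleaner since the contour estimate is then applied only to the holomorphic remainder $g$, which is uniformly $O(\log q(|t|+2))$ on $\mathcal{C}$, avoiding any worry about $1/(s-\beta_1)$ blowing up on the circle when $|t_0|$ is comparable to $R$.
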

\begin{proof}
The Taylor expansion of $(L^{\prime}/L)(s, \chi_1)$ at the point $ s_0=\beta_1+it_0$ is given by 
\begin{equation*}
\frac{L^{\prime}(s, \chi_1)}{L(s, \chi_1)}=\sum_{n\geq 0}Q_n (s-s_0)^n,  
\end{equation*}
where the coefficients $Q_n$ are defined by 
\begin{equation*}
Q_n=\res \left(\frac{L^{\prime}(s, \chi_1)}{L(s, \chi_1)}\frac{1}{(s-s_0)^{n+1}}; s_0 \right).
\end{equation*}
In order to calculate the residue above, we consider that the contour $\mathcal{C}$ is a positively oriented circle of radius $R=c_3/\log(q(|t_0|+2))$ and centered at $s_0$, where $c_3\leq c_0/2$ is sufficiently small. In the case when $|t_0|$ is very small,  we see that the inside of the contour $\mathcal{C}$ can contain at most one pole of $(L^{\prime}/L)(s, \chi_1)$ at $\beta_1$. Let $s=s_0+Re^{i\theta}$, where $ 0\leq \theta\leq 2\pi$, we find that 
\begin{equation*}
Q_n+\res \left(\frac{L^{\prime}(s, \chi_1)}{L(s, \chi_1)}\frac{1}{(s-s_0)^{n+1}}; \beta_1\right)=\frac{1}{2\pi i}\int_{C}\frac{L^{\prime}(s, \chi_1)}{L(s, \chi_1)}\frac{ds}{(s-s_0)^{n+1}}.
\end{equation*}
Using Eq.~\eqref{pro33}, we get 
\begin{eqnarray*}
\res \left(\frac{L^{\prime}(s, \chi_1)}{L(s, \chi_1)}\frac{1}{(s-s_0)^{n+1}}; \beta_1\right)&=&
\lim\limits_{s\rightarrow \beta_1}\left[(s-\beta_1)\frac{L^{\prime}(s, \chi_1)}{L(s, \chi_1)}\frac{1}{(s-s_0)^{n+1}}\right]
     \\&=&
     \Ocal\left(|t_0|^{-n-1}\right)
\end{eqnarray*}
and 
\begin{equation*}
\frac{1}{2\pi i}\int_{\mathcal{C}}\frac{L^{\prime}(s, \chi_1)}{L(s, \chi_1)}\frac{ds}{(s-s_0)^{n+1}}=\Ocal\left((\log q(|t_0|+2))^{n+1}\right).
\end{equation*}
When $s=\beta_1$ is not inside the circle, the residue term does not appear. This completes the proof. 
\end{proof}
\section{An asymptotic formula}
\label{section4}
To aid in formulating our next result, it is convenient to employ the
notation $m=m_1 m_2\cdots m_k$,  $n=n_1 n_2\cdots n_k$, and $\mathscr{R}$ is a set of the pairs $(m, n)$ with the conditions $ m, n\geq 1$, $ (q, mn)=1$ and $m\equiv n\pmod{q}$. When  we have  extra condition such as  $m=n$, $m\neq n$ or $m<n$, we write $\mathscr{R}_{n=m}$,  $\mathscr{R}_{n\neq m}$ or $\mathscr{R}_{ m<n}$, respectively. 
\begin{prop}
\label{mainpro}
Let $m_i$, $n_i$ and $k$ be positive integers for $i \in \{1, 2, \cdots, k\}$. For any real $t_0$ and $X>1$,  we have
\begin{align}
\label{important}
&\sum_{\mathscr{R}} \frac{\sum\limits_{m=m_1\cdots m_k}\prod\limits_{i=1}^{k}\Lambda(m_i) \sum\limits_{n=n_1\cdots n_k}\prod\limits_{i=1}^{k}\Lambda(n_i)}{m^{1+it_0} n^{1-it_0}} e^{-m n/X}
\\&= \sum_{ \substack{m\geq 1 \\(m,q)=1}} \frac{\Bigl(\sum\limits_{{m}=m_1\cdots m_k}\Lambda(m_1)\cdots \Lambda(m_k)\Bigr)^{2}}{{m}^2}
+\Ocal_k\left(\frac{(\log X)^{2k+2}}{q}+\frac{(\log X)^{2k}}{\sqrt{X}}\right).\nonumber
\end{align}
\end{prop}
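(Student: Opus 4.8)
The plan is to split the sum over $\mathscr{R}$ according to whether $m=n$ or $m\neq n$, and to show that the diagonal $m=n$ produces the main term while the off-diagonal contributes only to the error. On the diagonal, the condition $m\equiv n\pmod q$ is automatic, so
\[
\sum_{\mathscr{R}_{n=m}} \frac{\bigl(\sum_{m=m_1\cdots m_k}\prod_i\Lambda(m_i)\bigr)^2}{m^2}\, e^{-m^2/X}
= \sum_{\substack{m\geq 1\\(m,q)=1}} \frac{\bigl(\sum_{m=m_1\cdots m_k}\Lambda(m_1)\cdots\Lambda(m_k)\bigr)^2}{m^2}\, e^{-m^2/X},
\]
and I would remove the smoothing factor $e^{-m^2/X}$ at the cost of an error. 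Using Lemma~\ref{lem4} to bound $\sum_{m=m_1\cdots m_k}\prod_i\Lambda(m_i)\le (\log m)^k$, the tail $\sum_{m>\sqrt X}(\log m)^{2k}/m^2$ and the contribution of $1-e^{-m^2/X}$ for $m\le\sqrt X$ are each $O_k((\log X)^{2k}/\sqrt X)$ by partial summation, giving the stated main term with an acceptable error. (Convergence of the full infinite series is guaranteed by comparison with $\sum (\log m)^{2k}/m^2$.)

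For the off-diagonal $m\neq n$, by symmetry it suffices to treat $\mathscr{R}_{m<n}$ and double. Here the congruence $m\equiv n\pmod q$ together with $m\neq n$ forces $|n-m|\ge q$, hence $n\ge q$; combined with the smoothing factor $e^{-mn/X}$ which restricts attention to $mn\le X(\log X)^{O(1)}$ (the tail beyond being negligible), one has $m\le X/q$ roughly. I would bound the arithmetic coefficients by $(\log m)^k(\log n)^k\le (\log X)^{2k}$ using Lemma~\ref{lem4}, reducing matters to estimating
\[
(\log X)^{2k}\sum_{\substack{m<n,\ m\equiv n\,(q)\\ (mn,q)=1}} \frac{1}{mn}\, e^{-mn/X}.
\]
Writing $n=m+\ell q$ with $\ell\ge 1$, the sum over $\ell$ of $1/(m+\ell q)$ against the rapidly decaying weight is $O((\log X)/q)$ uniformly, and the remaining sum over $m\le X$ of $1/m$ is $O(\log X)$; so the off-diagonal is $O_k((\log X)^{2k+2}/q)$, matching the claimed error term.

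The main obstacle is carrying out the off-diagonal estimate cleanly: one must handle the interaction between the congruence constraint (which saves a factor $q$) and the Gaussian smoothing weight $e^{-mn/X}$ honestly, so that the double sum over $m$ and $\ell$ genuinely telescopes into $(\log X)^{2}/q$ rather than something weaker. A careful choice of how to truncate $mn\le X^{1+\epsilon}$, together with the elementary bound $\sum_{\ell\ge 1} e^{-\ell q m/X}/(m+\ell q) \ll (\log X)/q$ (splitting at $\ell q\asymp X/m$), should suffice; everything else is routine partial summation backed by Lemma~\ref{lem4}.
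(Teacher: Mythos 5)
Your proposal is correct and follows essentially the same route as the paper: split $\mathscr{R}$ into the diagonal $m=n$ (remove the weight $e^{-m^2/X}$ at cost $O((\log X)^{2k}/\sqrt X)$, exactly as in the paper's $B_q^\sharp$/$B_q^\flat$ split) and the off-diagonal $m\neq n$ (write $n=m+\ell q$, bound the arithmetic factor via Lemma~\ref{lem4}, and sum over $\ell$ and $m$ to get $O((\log X)^{2k+2}/q)$). The only cosmetic difference is that you truncate to $mn\ll X(\log X)^{O(1)}$ up front and pull out $(\log m)^k(\log n)^k\ll(\log X)^{2k}$, whereas the paper keeps these logarithmic weights inside and evaluates the resulting integrals $I_1,I_2$ directly; both give the same $(\log X)^{2k+2}/q$.
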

\begin{proof}
Without loss of generality we can assume $t_0\geq 0$. In order to prove our proposition, we denote the left-hand side of Eq.~\eqref{important} by $F_q(X)$. 
We split the set $\mathscr{R}$ defined by the condition $m\equiv n(\mkern3mu \mathrel{\textsl{mod}} \mkern3mu q)$ and $(q, mn)=1$ into two subsets.
\begin{itemize}
\item[$\bullet$] The first case is when $(q,m n)= 1$ and $ m \neq n$. We define 
\begin{equation*}
A_q(X):=
 \sum_{\mathscr{R}_{m\neq n}} \Bigl(\sum_{m=\prod\limits_{i=1}^{k} m_i} \Lambda(m_1)\cdots \Lambda(m_k) \sum_{n=\prod\limits_{i=1}^{k} n_i}\Lambda(n_1)\cdots \Lambda(n_k)\Bigr)  \frac{e^{-mn/X}}{m^{1+it_0} n^{1-it_0} }. 
 \end{equation*}
 Applying Lemma~\ref{lem4} to the above, we find that 
 \begin{eqnarray*}
 A_q(X) 
& \ll & 
\sum_{\mathscr{R}_{ m<n}} \frac{e^{-mn/X}}{mn}(\log m)^k (\log n)^k
\\& \ll & 
\sum_{{m}\geq 1}\sum_{\substack{ \ell\geq 1\\ {n}={m}+\ell q}} \frac{e^{-mn/X}}{m n}(\log m)^k (\log n)^k
\\& = &  
\sum_{m\geq 1}\sum_{ \ell\geq 1} \frac{e^{-m(m+\ell q)/X}}{m ({m}+\ell q)}(\log m)^k (\log (m+\ell q))^k
\\& = &
\sum_{m\geq 1} \frac{e^{-m^2/X}(\log m)^k}{m}\sum_{ \ell\geq 1} \frac{e^{-(m\ell q)/X}}{({m}+\ell q)} (\log (m+\ell q))^k.
\end{eqnarray*}
We first estimate the inner sum above as follows:
\begin{eqnarray*}
\sum_{ \ell\geq 1} \frac{e^{-(m\ell q)/X}}{({m}+\ell q)} (\log (m+\ell q))^k &\ll& \int_1^{\infty}\frac{e^{-(mt q)/X}}{({m}+t q)} (\log (m+t q))^k\, dt
\\&\ll &
\left(\int_1^{\frac{X}{mq}}+\int_{\frac{X}{mq}}^{\infty}\right)\frac{e^{-(mt q)/X}}{({m}+t q)} (\log (m+t q))^k\, dt
\\&:=&
I_1+I_2,
\end{eqnarray*}
 say. We notice that $I_1$ does not exist if $m>X/q$. Otherwise, it is estimated by 
\begin{equation*}
I_1\leq \int_{1}^{\frac{X}{mq}}\frac{\left( \log (m+tq)\right)^k}{(m+tq)}\, dt,
\end{equation*}
and putting $m+tq=u$, we have 
\begin{equation}
\label{eqproof1}
I_1\leq \frac{1}{q}\int_{m+q}^{m+\frac{X}{m}}\frac{(\log u)^k}{u}\, du \ll \frac{1}{q}\left( \log \left( m+\frac{X}{m}\right)\right)^{k+1}.
\end{equation}
After making the change of variable $mtq/X=v$, $I_2$ becomes 
\begin{eqnarray*}
I_2&=&\frac{X}{mq}\int_{1}^{\infty}\frac{e^{-v}}{\left(m+\frac{X}{m}v\right)}\left(\log \left( m+\frac{Xv}{m}\right)\right)^k\, dv
\\&\leq&
\frac{1}{q}\int_{1}^{\infty}\frac{e^{-v}}{v}\left(\log \left( m+\frac{Xv}{m}\right)\right)^k \, dv
\\&=&
\frac{1}{q}\left(\int_{1}^{m^2/X}+\int_{m^2/X}^{\infty}\right)\frac{e^{-v}}{v}\left(\log \left( m+\frac{Xv}{m}\right)\right)^k \, dv
\\&\leq &
\frac{(\log  2m)^k}{q}\int_{1}^{m^2/X}\frac{e^{-v}}{v} \, dv+\frac{1}{q}\int_{m^2/X}^{\infty}\frac{e^{-v}}{v}\left(\log  \frac{2Xv}{m}\right)^k \, dv,
\end{eqnarray*}
which yields to 
\begin{equation}
\label{eqproof2}
I_2 \ll 
\frac{1}{q}\left((\log m)^k+(\log X)^k\right).
\end{equation}
From Eqs.~\eqref{eqproof1} and \eqref{eqproof2}, we get 
\begin{equation*}
\sum_{ \ell\geq 1} \frac{e^{-(m\ell q)/X}}{({m}+\ell q)} (\log (m+\ell q))^k 
\ll 
\frac{1}{q}\left((\log m)^k+(\log X)^k+\left(\log \left( m+\frac{X}{m}\right)\right)^{k+1}\right).
\end{equation*}
Therefore
\begin{multline}
\label{eq000125}
qA_q(X)\ll  
\sum_{m\geq 1}\frac{e^{-m^2/X}}{m}(\log m)^{2k}+(\log X)^{k}\sum_{m\geq 1}\frac{e^{-m^2/X}}{m}(\log m)^k
\\+\sum_{m\geq 1}\frac{e^{-m^2/X}}{m}(\log m)^k\left(\log \left( m+\frac{X}{m}\right)\right)^{k+1}.
\end{multline}
The first sum above is  estimated by
\begin{equation*}
 \ll \int_{1}^{\sqrt{X}}\frac{(\log t)^{2k}}{t}\, dt+\int_{\sqrt{X}}^{\infty}\frac{e^{-t^2/X}}{t}(\log t)^{2k}\, dt.
\end{equation*}
The first integral here is estimated by  $\ll (\log X)^{2k+1}$. After making the change of variable $ t^2/X=v$, the second integral is  $\ll (\log X)^{2k} $. This gives us
\begin{equation*}
\sum_{m\geq 1}\frac{e^{-m^2/X}}{m}(\log m)^{2k}\ll (\log X)^{2k+1}.
\end{equation*}
Similarly, we observe that the second term on the right-hand side of Eq.~\eqref{eq000125} is 
\begin{equation*}
\ll (\log X)^{k}(\log X)^{k+1}=(\log X)^{2k+1}.  
\end{equation*}
As for the third sum on the right-hand side of Eq.~\eqref{eq000125}, it is estimated by 
\begin{multline*}
\sum_{m\geq 1}\frac{e^{-m^2/X}}{m}(\log m)^k\left( \log (m+X/m)\right)^{k+1} \\ 
\ll 
(\log X)^k\int_{1}^{\sqrt{X}}\frac{(\log X/t)^{k+1}}{t}\, dt +\int_{\sqrt{X}}^{\infty}\frac{e^{-t^2/X}}{t}\left( \log t\right)^{2k+1}\, dt.
\end{multline*}
It is easy to see that the first integral on the right-hand side of the above is $\ll (\log X)^{2k+2}$. By the change of variable $t^2/X=v$, the second integral is estimated by $\ll (\log X)^{2k+1}$. Thus, we find that 
\begin{equation*}
\sum_{m\geq 1}\frac{e^{-m^2/X}}{m}(\log m)^k\left( \log \left(m+\frac{X}{m}\right)\right)^{k+1} \ll (\log X)^{2k+2}.
\end{equation*}
Therefore, we get 
\begin{equation}
\label{eq012}
A_q(X)\ll \frac{(\log X)^{2k+2}}{q}.
\end{equation}
\item[$\bullet$] The second case is when $(q,m n)= 1$ and $m = n$. Then, define
\begin{equation*}
B_q(X) := \sum_{\mathscr{R}_{m=n}} \Bigl(\sum_{m=\prod\limits_{i=1}^{k} m_i} \Lambda(m_1)\cdots \Lambda(m_k) \sum_{n=\prod\limits_{i=1}^{k} n_i}\Lambda(n_1)\cdots \Lambda(n_k)\Bigr) \frac{e^{-{mn}/X}}{{m}^{1+it_0} {n}^{1-it_0}}, 
\end{equation*}
and put
\begin{equation*}
B_q(X)= B^{\sharp}_q(X)+B^{\flat}_q(X), 
\end{equation*}
where 
\begin{equation*}
B^{\sharp}_q(X) := \sum_{\substack{\mathscr{R}_{m=n}\\ m \leq X^{1/2} }} \Bigl(\sum_{m=\prod\limits_{i=1}^{k} m_i} \Lambda(m_1)\cdots \Lambda(m_k)\Bigr)^2 \frac{e^{-{{m}}^2/X}}{{m}^2}, 
\end{equation*}
and 
\begin{equation*}
B^{\flat}_q(X) := \sum_{\substack{\mathscr{R}_{m=n}\\ m > X^{1/2} }} \Bigl(\sum_{m=\prod\limits_{i=1}^{k} m_i} \Lambda(m_1)\cdots \Lambda(m_k)\Bigr)^2 \frac{e^{-{{m}}^2/X}}{{m}^2}. 
\end{equation*}
For the function $B^{\flat}_q(X)$, since ${m}>X^{1/2}$, we see that  $e^{-{{m}}^2/X} \leq 1$ and  
\begin{eqnarray*}
B^{\flat}_q(X)
&\ll& 
\sum_{\substack{\mathscr{R}_{m=n}\\m > X^{1/2} }}  \frac{\Bigl(\sum_{m=\prod\limits_{i=1}^{k} m_i} \Lambda(m_1)\cdots \Lambda(m_k)\Bigr)^2}{{m}^2}
\\&\ll&
\sum_{m> X^{1/2}} \frac{(\log m)^{2k}}{m^2},
\end{eqnarray*}
where we used Lemma~\ref{lem4}. Thus
\begin{equation}
\label{eq013}
B^{\flat}_q(X)
\ll
\frac{(\log X)^{2k}}{X^{1/2}}.
\end{equation}
For the function $B^{\sharp}_q(X)$, since $ {m}^2$ is small enough, we can rely on the approximation 
\begin{equation*}
e^{-{m}^2/X}=1+\Ocal{\left(\frac{{m}^2}{X}\right)}.
\end{equation*}
Then, the function $B^{\sharp}_q(X)$ is rewritten as 
\begin{equation*}
B^{\sharp}_q(X)=\sum_{\substack{\mathscr{R}_{m=n}\\ m \leq  X^{1/2} }} \frac{\Bigl(\sum\limits_{m=m_1\cdots m_k} \prod\limits_{i=1}^{k}\Lambda(m_i) \Bigr)^2}{{m}^2}+ \Ocal\left(\frac{1}{X}\sum_{\substack{\mathscr{R}_{m=n}\\ m\leq X^{1/2} }} \left(\sum\limits_{m=m_1\cdots m_k} \prod\limits_{i=1}^{k}\Lambda(m_i) \right)^2\right). 
\end{equation*}
Again using Lemma~\ref{lem4}, we see that the error term is $\Ocal\left( X^{-1/2}(\log X)^{2k}\right).$ Further, we remove the condition $m\leq X^{1/2}$ from the summation with the error $\Ocal\left(X^{-1/2}(\log X)^{2k}\right).$
Thus, we have
 \begin{equation}
 \label{eq014}
 B^{\sharp}_q(X)
 =
\sum_{\mathscr{R}_{m=n}}  \frac{\Bigl(\sum\limits_{m=m_1\cdots m_k} \prod\limits_{i=1}^{k}\Lambda(m_i) \Bigr)^2}{{m}^2}+ \Ocal\left(\frac{(\log X)^{2k}}{\sqrt{X}}\right).
 \end{equation}
From Eqs.~\eqref{eq013} and~\eqref{eq014}, we find that 
\begin{equation}
\label{eq015}
B_q(X)=\sum_{\substack{m\geq 1\\(m, q)=1}} \frac{\Bigl(\sum\limits_{m=m_1\cdots m_k} \prod\limits_{i=1}^{k}\Lambda(m_i) \Bigr)^2}{{m}^2}+\Ocal\left(\frac{(\log X)^{2k}}{\sqrt{X}}\right).
\end{equation}
From Eqs.~\eqref{eq012} and \eqref{eq015}, we obtain the assertion of the proposition.
\end{itemize}
\end{proof}
In the case when $q=p$ is a prime number, Proposition~\ref{mainpro} becomes
\begin{prop}
\label{mainpro2}
Let $m_i$, $n_i$ and $k$ be positive integers for $i \in \{1, 2, \cdots, k\}$. Let $q=p$ be a prime number. For any real $t_0$ and $X>1$,  we have
\begin{align}
\label{important1}
&\sum_{\mathscr{R}} \frac{\sum\limits_{m=m_1\cdots m_k}\prod\limits_{i=1}^{k}\Lambda(m_i) \sum\limits_{n=n_1\cdots n_k}\prod\limits_{i=1}^{k}\Lambda(n_i)}{m^{1+it_0} n^{1-it_0}} e^{-m n/X}
\\&= \sum_{ m\geq 1} \frac{\Bigl(\sum\limits_{{m}=m_1\cdots m_k}\Lambda(m_1)\cdots \Lambda(m_k)\Bigr)^{2}}{{m}^2}
+\Ocal_k\left(\frac{(\log X)^{2k+2}}{p}+\frac{(\log X)^{2k}}{\sqrt{X}}+\frac{(\log p)^{2k}}{p^2}\right).\nonumber
\end{align}
\end{prop}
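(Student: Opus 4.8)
The plan is to derive Proposition~\ref{mainpro2} as a specialization of Proposition~\ref{mainpro}, tracking the one place where the prime modulus allows an improvement: the condition $(m,q)=1$ in the main term, which becomes $p\nmid m$, i.e.\ costs only the removal of finitely many powers of $p$. First I would invoke Proposition~\ref{mainpro} with $q=p$ verbatim; this already gives the left-hand side equal to
\begin{equation*}
\sum_{\substack{m\geq 1\\ p\nmid m}}\frac{\bigl(\sum_{m=m_1\cdots m_k}\Lambda(m_1)\cdots\Lambda(m_k)\bigr)^2}{m^2}+\Ocal_k\!\left(\frac{(\log p)^{2k+2}}{p}+\frac{(\log X)^{2k}}{\sqrt X}\right),
\end{equation*}
where I have replaced $(\log X)^{2k+2}/q$ by $(\log X)^{2k+2}/p$; note that in the statement of Proposition~\ref{mainpro2} the first error term is written $(\log X)^{2k+2}/p$, consistent with this. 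So the only task remaining is to pass from the restricted sum $\sum_{p\nmid m}$ to the full sum $\sum_{m\geq1}$ and show the discrepancy is absorbed into the stated error term $\Ocal_k\bigl((\log p)^{2k}/p^2\bigr)$.

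Next I would estimate the difference
\begin{equation*}
\sum_{m\geq 1}\frac{\bigl(\sum_{m=m_1\cdots m_k}\Lambda(m_1)\cdots\Lambda(m_k)\bigr)^2}{m^2}-\sum_{\substack{m\geq 1\\ p\nmid m}}\frac{\bigl(\sum_{m=m_1\cdots m_k}\Lambda(m_1)\cdots\Lambda(m_k)\bigr)^2}{m^2}=\sum_{\substack{m\geq 1\\ p\mid m}}\frac{\bigl(\sum_{m=m_1\cdots m_k}\Lambda(m_1)\cdots\Lambda(m_k)\bigr)^2}{m^2}.
\end{equation*}
The inner coefficient is nonzero only when $m$ is a prime power (since a product of von Mangoldt values is nonzero only on prime powers), so the condition $p\mid m$ forces $m=p^j$ for some $j\geq 1$. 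Using Lemma~\ref{lem4}, each numerator is $\leq(\log m)^{2k}=(j\log p)^{2k}$, so the tail is
\begin{equation*}
\sum_{j\geq 1}\frac{(j\log p)^{2k}}{p^{2j}}=(\log p)^{2k}\sum_{j\geq1}\frac{j^{2k}}{p^{2j}}\ll_k\frac{(\log p)^{2k}}{p^2},
\end{equation*}
since the $j=1$ term dominates and the series $\sum_{j\geq1}j^{2k}p^{-2j}$ is $\ll_k p^{-2}$ for $p\geq 2$. This is exactly the extra error term appearing in~\eqref{important1}.

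Finally I would assemble: combining the two displays gives the left-hand side of~\eqref{important1} equal to $\sum_{m\geq1}(\cdots)/m^2$ plus $\Ocal_k\bigl((\log X)^{2k+2}/p+(\log X)^{2k}/\sqrt X+(\log p)^{2k}/p^2\bigr)$, which is the claimed formula. There is essentially no obstacle here; the only point requiring a modicum of care is the bookkeeping that the coefficient is supported on prime powers, so that ``$p\mid m$'' collapses to a single geometric-type series rather than a general divisor sum — once that is observed, the bound $\ll_k(\log p)^{2k}/p^2$ is immediate. I would also remark that this is the place where primality is genuinely used: for composite $q$ the set $\{m:(m,q)>1\}$ of prime powers $p^j$ with $p\mid q$ can contribute a sum of size $\gg\sum_{p\mid q}(\log p)^{2k}/p^2$, which need not be as small as $1/q$, and so one cannot in general drop the coprimality condition from the main term.
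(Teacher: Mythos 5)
Your overall strategy is the same as the paper's: apply Proposition~\ref{mainpro} with $q=p$, then estimate the tail $\sum_{p\mid m}$ that separates the restricted sum from the full sum. However, the key justification you give for that tail bound is mathematically wrong. You claim that the coefficient $\sum_{m=m_1\cdots m_k}\Lambda(m_1)\cdots\Lambda(m_k)$ is ``nonzero only when $m$ is a prime power'', and conclude that $p\mid m$ forces $m=p^j$. This is true only for $k=1$. For $k\geq 2$ the coefficient is supported on products of up to $k$ prime powers; e.g.\ with $k=2$ and $m=2p$ one has $\Lambda(2)\Lambda(p)+\Lambda(p)\Lambda(2)=2\log 2\log p>0$. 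Consequently your display
\begin{equation*}
\sum_{j\geq 1}\frac{(j\log p)^{2k}}{p^{2j}}
\end{equation*}
accounts only for the terms $m=p^j$ and omits infinitely many contributing $m$ (such as $m=2p,\,3p,\,pq^{a},\dots$). Since you are bounding the tail from above, this omission is a genuine gap: you have not shown the full tail is $\Ocal_k\bigl((\log p)^{2k}/p^2\bigr)$.

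The gap is easy to repair and the fix is what the paper implicitly does: apply Lemma~\ref{lem4} to bound the coefficient by $(\log m)^{2k}$, then sum over \emph{all} multiples of $p$ rather than only pure powers. Writing $m=pn$,
\begin{equation*}
\sum_{\substack{m\geq 1\\ p\mid m}}\frac{(\log m)^{2k}}{m^2}
=\frac{1}{p^2}\sum_{n\geq 1}\frac{(\log(pn))^{2k}}{n^2}
\ll_k \frac{1}{p^2}\sum_{n\geq 1}\frac{(\log p)^{2k}+(\log n)^{2k}}{n^2}
\ll_k \frac{(\log p)^{2k}}{p^2},
\end{equation*}
which gives the stated error term. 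With this correction your argument matches the paper's. Your closing remark on why primality matters is a useful observation, but note that even for composite $q$ the sum over $m$ with $(m,q)>1$ is still $\Ocal\bigl(\sum_{p\mid q}(\log p)^{2k}/p^2\bigr)$; the issue is simply that this need not be as small as the other error terms, which is why the coprimality condition is retained in the general case.
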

\begin{proof}
This is clear from 
\begin{eqnarray}
\label{eq0152}
\sum_{\substack{ m\geq 1\\ (m,p)=1 }} \frac{\Bigl(\sum\limits_{m=m_1\cdots m_k} \prod\limits_{i=1}^{k}\Lambda(m_i) \Bigr)^2}{{m}^2}&=&
\sum_{ m\geq 1} \frac{\Bigl(\sum\limits_{m=m_1\cdots m_k} \prod\limits_{i=1}^{k}\Lambda(m_i) \Bigr)^2}{{m}^2}-\sum_{\substack{m\geq 1\\p|m}} \frac{\Bigl(\sum\limits_{m=m_1\cdots m_k} \prod\limits_{i=1}^{k}\Lambda(m_i) \Bigr)^2}{{m}^2}\nonumber
\\&=&
\sum_{m\geq 1} \frac{\Bigl(\sum\limits_{m=m_1\cdots m_k} \prod\limits_{i=1}^{k}\Lambda(m_i) \Bigr)^2}{{m}^2}+\Ocal\left(\frac{(\log p)^{2k}}{p^2}\right),
\end{eqnarray}
where we used Lemma~\ref{lem4}. 
\end{proof}
\section{Proof of Theorems~\ref{Thm1} and \ref{cor1}}
\label{section5}

Let $q\geq 2$.
We consider the function 
\begin{equation*}
G_q(s)=\sum_{\substack{\chi \mkern3mu \mathrel{\textsl{mod}} \mkern3mu q}}\left(\frac{L^{\prime}(s+it_0, \chi)}{L(s+it_0, \chi)}\right)^k\left(\frac{L^\prime(s-it_0, \overline{\chi})}{L(s-it_0, \overline{\chi})}\right)^k
\end{equation*}
where $\chi$ runs over all Dirichlet characters modulo $q$. When $\sigma>1$, using the fact that 
\begin{equation*}
\frac{L^\prime(s, \chi)}{L(s, \chi)}=\sum_{n\geq 1}\frac{\chi(n)\Lambda(n)}{n^s},
\end{equation*}
one can write the function $G_q(s)$ as 
\begin{equation*}
G_q(s)=
\sum_{\substack{\chi \mkern3mu \mathrel{\textsl{mod}} \mkern3mu q}}\sum_{\substack{m_1\cdots m_k \geq 1 \\n_1\cdots n_k \geq 1
 }}\frac{\prod\limits_{i=1}^{k} \Lambda(m_i)\chi(m_i) \prod\limits_{i=1}^{k} \Lambda(n_i)\bar{\chi}(n_i)}{\left(\prod\limits_{i=1}^{k} m_in_i\right)^s \left(\prod\limits_{i=1}^{k} m_i\right)^{it_0}\left(\prod\limits_{i=1}^{k}n_i\right)^{-it_0}}.
\end{equation*}
The proof of our theorems relies on two distinct evaluations of the quantity:
\begin{equation}
\label{eq018}
S_q(X)=\frac{1}{2\pi i}\int\limits_{3-i\infty}^{3+i\infty}G_q(s)X^{s-1}\Gamma(s-1)\, ds.
\end{equation}
We write the integrand of the right-hand side of the above as $f(s)$.
\subsection{The first evaluation of $S_q(X)$} It relies on the formula $e^{-y}=\frac{1}{2i\pi} \int_{2-i\infty}^{2+i\infty}y^{-s}\Gamma(s)\, ds$ (valid for positive $y$) and on the use of Proposition~\ref{Thm1}. We readily find that
\begin{eqnarray*}
S_q(X)&=& \frac{1}{2\pi i}\sum_{\chi \mkern3mu \mathrel{\textsl{mod}} \mkern3mu q}\sum_{\substack{m_1\cdots m_k \geq 1 \\n_1\cdots n_k \geq 1
 }}\frac{\prod\limits_{i=1}^{k} \Lambda(m_i)\chi(m_i) \prod\limits_{i=1}^{k} \Lambda(n_i)\bar{\chi}(n_i)}{\left(\prod\limits_{i=1}^{k} m_i\right)^{1+it_0}\left(\prod\limits_{i=1}^{k}n_i\right)^{1-it_0}}
 \int_{2-i\infty}^{2+i\infty}\Bigl(\frac{X}{\prod\limits_{i=1}^{k} m_in_i}\Bigr)^s\Gamma(s)\, ds
 \\&=&
 \varphi(q)\sum_{\substack{m, n \geq 1 \\ m\equiv n \mkern3mu \mathrel{\textsl{mod}} \mkern3mu q \\ (q, mn)=1} }\frac{\left(\sum\limits_{m=m_1\cdots m_k} \Lambda(m_1)\cdots \Lambda(m_k) \sum\limits_{n=n_1\cdots n_k} \Lambda(n_1)\cdots \Lambda(n_k)\right)}{m^{1+it_0}n^{1-it_0}}\, e^{-mn/X}.
\end{eqnarray*}
Thanks to Proposition~\ref{mainpro}, we get 
\begin{equation}
\label{eq019}
S_q(X)
=
\varphi(q)\sum_{ \substack{m\geq 1 \\(m,q)=1}} \frac{\left(\sum\limits_{{m}=m_1\cdots m_k}\prod_{i=1}^{k}\Lambda(m_i)\right)^{2}}{{m}^2}+Y
\end{equation}
with
\begin{equation}\label{est_Y}
Y=\Ocal\left(\frac{\varphi(q)}{q}(\log X)^{2k+2}+
\frac{\varphi(q)(\log X)^{2k}}{\sqrt{X}}\right).
\end{equation}

\subsection{The second evaluation of $S_q(X)$} 
From Proposition~\ref{thm3}, we note that the following regions
$$ \mathcal{D}_1= \left\{\sigma \geq 1-\frac{c}{\log (q(|t+t_0|+2))}\right\}$$
and 
$$\mathcal{D}_2= \left\{\sigma \geq 1-\frac{c}{\log (q(|t-t_0|+2))}\right\},$$
are zero-free regions of the functions $L(s+it_0, \chi)$ and $L(s-it_0, \overline{\chi})$ respectively, except for the possible Siegel zero $\beta_1$. 
Then, for any Dirichlet character $\chi \pmod q$ and $T\geq 2$, we see that the region  
\begin{equation*}
\mathcal{D}_3=\left\{ \sigma \geq 1-\frac{c}{\log (q(T+|t_0|+2))}, \quad |t|\leq T\right\}
\end{equation*}
is a zero-free region of the both functions $L(s+it_0, \chi)$ and $L(s-it_0, \overline{\chi})$, except for the possible zeros $\beta_1\pm it_0$ (see Figure~\ref{Fig2}).
\begin{figure}[h]
\centering
\includegraphics[width=9cm]{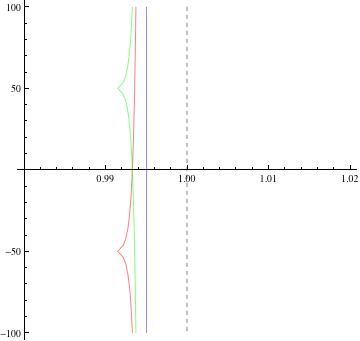}
\caption{The regions $ \mathcal{D}_1$ (red),  $\mathcal{D}_2$ (green) and $ \mathcal{D}_3$ (purple).}
\label{Fig2}
\end{figure}

Now, let $A(c_1)=1-c_1/\log (q(T+|t_0|+2))$ with $0<c_1<c\; (<c_0/2)$, and
shift the part $|t|\le T$ of the path of integration in Eq.~\eqref{eq018} to the line 
segment $\sigma+it$ defined with 
$\sigma=A(c_1)$ and $|t|\le T$. 
We choose $c_1$ so that $\beta_1$ (if exists in the region \eqref{region}) satisfies the inequality
\begin{align}\label{c_1_condition}
|\beta_1-A(c_1)|\geq \frac{c_1}{10\log (q(T+|t_0|+2))}.
\end{align}
Put
$$f_{\chi,t_0}(s)=\left(\frac{L'(s+it_0,\chi)}{L(s+it_0,\chi)}\right)^k\left(\frac{L'(s-it_0, \overline{\chi})}{L(s-it_0, \overline{\chi})}\right)^k\Gamma(s-1) X^{s-1},$$
then $f(s)=\sum_{\chi}f_{\chi,t_0}(s)$. Let $\mathcal{C}_T$ denote the closed contour that consists of line segments joining the points $3-iT$, $3+iT$, $A(c_1)+iT$ and $A(c_1)-iT$ shown Figure~\ref{Fig3}, that is  $\mathcal{C}_T=I_1\cup I_2 \cup I_3 \cup I_4$ with
\begin{itemize}
\item $I_1$: The line segment from $3-iT$ to $3+iT$, 
\item $I_2$: The line segment from $3+iT$ to $A(c_1)+iT$,
\item $I_3$: The line segment from $A(c_1)+iT$ to $A(c_1)-iT$, 
\item $I_4$: The line segment from $A(c_1)-iT$ to $3-iT$.
\end{itemize} 
By Eq.~\eqref{eq018}, we note that all the possibilities of the poles of the function $ f_{\chi, t_0}(s)$ occurring inside $\mathcal{C}_T$ are as follows: 
\begin{itemize}
\item $s_1$: a pole at $1$, for any $t_0$ and for any $ \chi$, 
\item $s_2, s_3$: two poles at $ 1+ it_0$ and $1-it_0$ respectively, of order $k$, when $ \chi=\chi_0$ and $t_0\neq 0$,  
\item $s_4, s_5$: two possible poles at $\beta_1+ it_0$ and $\beta_1-it_0$ respectively, of order $k$, when $\chi =\chi_1$ and $t_0\neq 0$,
\item $s_6$: a possible pole of order $2k$ at $s=\beta_1$ when $\chi = \chi_1$ and $t_0=0$.
\end{itemize}
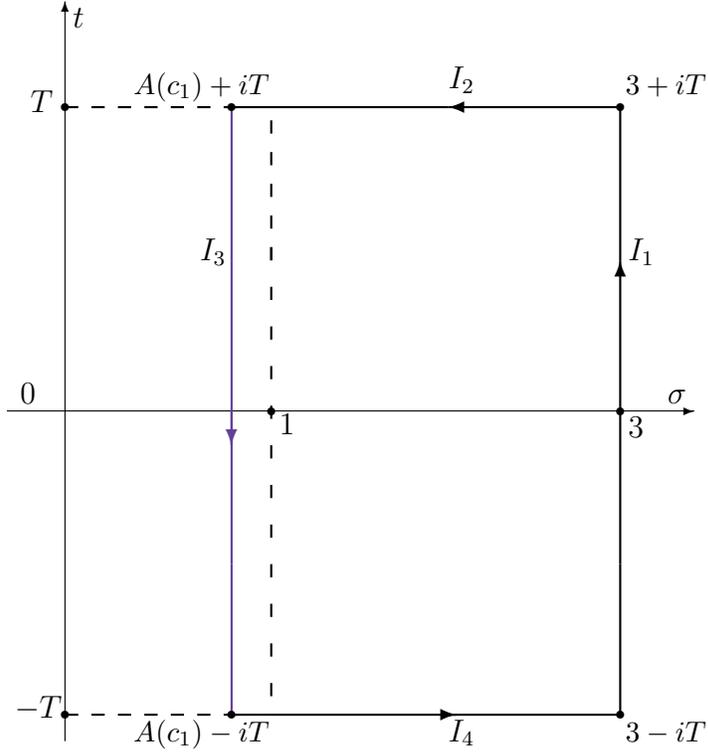
\begin{figure}
\begin{center}
\begin{picture}(270,280)
\put(-5,125){\vector(1,0){260}}
\put(245,128){$\sigma$}
	
\put(17,0){\vector(0,1){280}}
\put(20,270){$t$}
		
\thicklines
\put(227,10){\line(0,1){57}}
\put(227,67){\line(0,1){58}}			
\put(227,125){\vector(0,1){57}}
\put(227,182){\line(0,1){58}}
\put(230,182){$I_1$}
		
\put(227,240){\vector(-1,0){65}}
\put(162,240){\line(-1,0){83}}
\put(162,247){$I_2$}

{\color{RoyalPurple}{\put(80,182){\line(0,1){58}}
\put(80,182){\vector(0,-1){70}}
\put(80,67){\line(0,1){58}}
\put(80,67){\line(0,-1){57}}}}
\put(68,182){$I_3$}
	
\put(80,10){\vector(1,0){85}}
\put(162,10){\line(1,0){65}}
\put(162,0){$I_4$}
		
\put(0,128){$0$}
\put(227,125){\circle*{3}}
\put(230,115){$3$}
\put(95,125){\circle*{3}}
\put(98,116){$1$}

\put(17,240){\circle*{3}}
\put(4,238){$T$}

\put(17,10){\circle*{3}}
\put(-2,9){$-T$}

\put(227,240){\circle*{3}}
\put(229,245){\small{$3+iT$}}

\put(227,10){\circle*{3}}
\put(229,0){\small{$3-iT$}}

\put(80,10){\circle*{3}}
\put(43,0){\small{$A(c_1)-iT$}}

\put(80,240){\circle*{3}}
\put(43,245){\small{$A(c_1)+iT$}}

\multiput(95,182)(0,12){5}{\line(0,1){5}}
\multiput(95,182)(0,-15){12}{\line(0,1){5}}

\multiput(17,240)(10,0){7}{\line(1,0){5}}
\multiput(17,10)(10,0){7}{\line(1,0){5}}
			
		
			
\end{picture}
\end{center}
\caption{The contour $C_T$ in the complex plane.}
\label{Fig3}
\end{figure}
\subsubsection{The calculus of residues.}
\begin{itemize}
\item[\underline{Pole $s_1$}:] We distinguish two cases depending on $t_0$. The first case is  when $t_0\neq 0$. We observe that the function $f_{\chi,t_0}(s)$ has a pole at $s=1$ of order $1$. Then, one finds that
\begin{equation}
\label{eq020}
\res( f_{\chi,t_0}(s); 1 )=
\left(\frac{L^{\prime}(1+it_0,\chi)}{L(1+it_0,\chi)}\right)^k\left(\overline{\frac{L^\prime(1+it_0, \chi)}{L(1+it_0, \chi)}}\right)^k.
\end{equation}
The second case is when $t_0=0$. For $\chi\neq \chi_0$, the function $f_{\chi,t_0}(s)$ has again a pole at $s=1$ of order $1$. Then
\begin{equation*}
\res( f_{\chi,0}(s); 1 )=
\left(\frac{L^{\prime}(1, \chi)}{L(1, \chi)}\right)^k\left(\frac{L^\prime(1, \overline{\chi})}{L(1, \overline{\chi})}\right)^k.
\end{equation*}
As for $\chi=\chi_0$, the function $f_{\chi_0,0}(s)$ has a pole at $s=1$ of order $2k+1$ and the residue of our function at this point is calculated as follows: Taking $s'=s-1$, we find that  
\begin{equation}
\label{eq022}
X^{s'}=\sum_{n=0}^{\infty}M_{n,0}(X)s'^n
\end{equation}
and that 
\begin{equation}
\label{eq023}
s'\Gamma(s')=\Gamma(s'+1)=\sum_{n=0}^{\infty}N_{n,0} s'^n, 
\end{equation}
where 
\begin{equation}
\label{eq024}
M_{n,0}(X)=(\log X)^n/n!, \qquad N_{n,0}=\Gamma^{(n)}(1)/n!.
\end{equation} 
Using the fact that $ L(s,\chi_0)=\zeta(s)\prod_{p|q}\left(1-\frac{1}{p^s}\right)$, we write
\begin{equation*}
s'\frac{L'(s'+1, \chi_0)}{L(s'+1, \chi_0)}=
s'\frac{\zeta'(s'+1)}{\zeta (s'+1)} +s'\sum_{p|q}\frac{\log p}{p^{s'+1}-1}.
\end{equation*}
Thanks to Lemma~\ref{lem7} and Lemma~\ref{lem5} with $a=1$, we get 
\begin{eqnarray}
\label{eq00}
s'\frac{L'(s'+1, \chi_0)}{L(s'+1, \chi_0)}&=&
\sum_{n=0}^{\infty} E_{n} s'^{n}+\sum_{n=0}^{\infty} F_{n, 1}s'^{n+1}\nonumber
\\&=&
\sum_{n=0}^{\infty} E_{n} s'^{n}+\sum_{n=1}^{\infty} F_{n-1, 1}s'^{n}\nonumber
\\&=&\sum_{n=0}^{\infty} H_{n} s'^{n},
\end{eqnarray}
where $H_0=E_{0}$ and $H_{n}=E_{n}+ F_{n-1, 1}$ for $n\geq 1$. Here the coefficients $  E_n, F_{n, 1}$ are defined by Eqs.~\eqref{eq09} and \eqref{eq02} respectively. Using the properties of power series, one finds that 
\begin{equation}
\label{eq025}
\left(s'\frac{L'(s'+1, \chi_0)}{L(s'+1, \chi_0)}\right)^{2k}=\Bigl(\sum_{n=0}^{\infty} H_{n} s'^{n}\Bigr)^{2k}=\sum_{n=0}^{\infty} \tilde{H}_{n} s'^{n}, 
\end{equation}
where 
\begin{equation}
\label{eq026}
\tilde{H}_{n}=\sum_{n=n_1+ n_2+\cdots+ n_{2k}}H_{n_1} H_{n_2}\cdots H_{n_{2k}}=\Ocal_{k}\left( Q^{2k}\right).
\end{equation}
By Eqs.~\eqref{eq022},~\eqref{eq023} and~\eqref{eq025}, we infer
\begin{eqnarray*}
\res( f_{\chi_0, 0}(s); 1)&=&\frac{1}{(2k)!}\lim_{s\rightarrow 1}\frac{d^{2k}}{ds^{2k}}\left[(s-1)^{2k+1} f_{\chi_0, 0}(s)\right]
\\&=&
\frac{1}{(2k)!}\lim_{s'\rightarrow 0}\frac{d^{2k}}{(ds')^{2k}}\left[s'^{2k+1} f_{\chi_0, 0}(s'+1)\right]
\\&=&
\frac{1}{(2k)!}\lim_{s'\rightarrow 0}\frac{d^{2k}}{(ds')^{2k}}\left[ \sum_{n=0}^{\infty}J_n(X) s'^n\right],
\end{eqnarray*}
where the coefficients $J_n(X) $ are determined by multiplying the above three series together and via the properties of power series, namely 
\begin{equation}
\label{eq028}
J_n(X)=\sum_{n=n_1+ n_2+ n_3} M_{n_1,0}(X)N_{n_2,0}\tilde{H}_{n_3}, 
\end{equation}
where $ M_{n_1, 0}(X), N_{n_2, 0}$ and $\tilde{H}_{n_3}$ are defined by Eqs.~\eqref{eq024} and \eqref{eq026} respectively. 
Therefore, we get
\begin{equation}
\label{eq027}
\res( f_{\chi_0,0}(s); 1)=J_{2k}(X)=\Ocal\left(Q^{2k}(\log X)^{2k}\right). 
\end{equation}
From Eqs.~\eqref{eq020} and \eqref{eq027}, we write 
\begin{equation}
\label{eq0028}
\res( f(s); 1)=
\begin{cases}
\sum\limits_{\chi \mkern3mu \mathrel{\textsl{mod}} \mkern3mu q }\left|\frac{L^{\prime}(1+it_0, \chi)}{L(1+it_0, \chi)}\right|^{2k}, &  t_0\neq 0;\cr
\sum\limits_{\substack{\chi \mkern3mu \mathrel{\textsl{mod}} \mkern3mu q\\ \chi\neq \chi_0} }\left|\frac{L^{\prime}(1,\chi)}{L(1,\chi)}\right|^{2k}+J_{2k}(X), &  t_0=0.
\end{cases}
\end{equation}
\item[\underline{Pole $s_2$}:] For $\chi=\chi_0$ and $t_0\neq 0$, the function $f_{\chi_0, t_0}(s)$ has a pole at $s=1+it_0$ of order $k$. Taking $s'=s-1-it_0$, we write each term of $f_{\chi_0, t_0}(s)$ as follows 
\begin{equation}
\label{eq029}
X^{s-1}=X^{it_0}e^{s'\log X}=\sum_{n=0}^{\infty}M_{n, t_0}(X) s'^n,
\end{equation}
\begin{equation}
\label{eq030}
\Gamma(s-1)=\Gamma(s'+it_0)=\sum_{n=0}^{\infty}N_{n, t_0} s'^n, 
\end{equation}
where 
\begin{equation}
\label{eq031}
M_{n, t_0}(X) = X^{it_0}\frac{(\log X)^n}{n!},
\qquad\qquad  N_{n, t_0} =\frac{\Gamma^{(n)}(it_0)}{n!}.
\end{equation}
Again using the fact that $ L(s,\chi_0)=\zeta(s)\prod_{p|q}\left(1-\frac{1}{p^s}\right)$, we find that
\begin{eqnarray*}
\frac{L'(s+it_0, \chi_0)}{L(s+it_0, \chi_0)}&=&\frac{L'(s'+1+2it_0, \chi_0) }{L(s'+1+2it_0, \chi_0) }
\\&=&
\frac{\zeta'(s'+1+2it_0)}{\zeta (s'+1+2it_0)} +\sum_{p|q}\frac{\log p}{p^{s'+1+2it_0}-1}.
\end{eqnarray*}
Using Lemma~\ref{lem6} with $s_0=1+2it_0$ and Lemma~\ref{lem5} with $a=1+2it_0$, the above function is written in the form 
\begin{equation*}
\frac{L'(s+it_0, \chi_0)}{L(s+it_0, \chi_0)}=
\sum_{n=0}^{\infty}K_{n,t_0} s'^n,
\end{equation*}
where 
\begin{equation}
\label{eq033}
K_{n,t_0} = C_{n,1+2it_0} + F_{n,1+2it_0}.
\end{equation}
Here $C_{n,1+2it_0}$ and $ F_{n,1+2it_0}$  are defined in Eqs.~\eqref{eq04} and \eqref{eq02} respectively. Thus, we get 
\begin{equation}
\label{eq034}
\left(\frac{L'(s+it_0, \chi_0)}{L(s+it_0, \chi_0)}\right)^k=
\Bigl(\sum_{n=0}^{\infty}K_{n,t_0} s'^n\Bigr)^k=\sum_{n=0}^{\infty}\tilde{K}_{n,t_0} s'^n,
\end{equation}
where 
\begin{eqnarray}
\label{eq035}
 \tilde{K}_{n,t_0}&=&\sum_{n=n_1+n_2+\cdots+n_{k}}K_{n_1,t_0}K_{n_2,t_0}\cdots K_{n_k,t_0}.
\end{eqnarray}
From Eqs.~\eqref{eq05} and \eqref{eq02} we have
\begin{align*}
K_{n,t_0}=\left\{
\begin{array}{lll}
O(|t_0|^{-n-1}+Q) & \;{\rm if} \;& 0<|t_0|\leq 1,\\
O((\log(|t_0|+2))^{n+1}+Q) &\; {\rm if}\; & |t_0|>1.
\end{array}\right.
\end{align*}
Therefore if $0<|t_0|\leq 1$, 
\begin{align*}
\tilde{K}_{n,t_0}&\ll \sum_{n=n_1+n_2+\cdots+n_{k}}\left(\frac{1}{|t_0|^{n_1+1}}+Q\right)
\cdots \left(\frac{1}{|t_0|^{n_k+1}}+Q\right)\\
&\ll Q^k+\sum_{l=1}^k \sum_{0\leq n_1+n_2+\cdots+n_{l}\leq n}
\frac{Q^{k-l}}{|t_0|^{n_1+\cdots+n_l+l}}.
\end{align*}
Each term in the sum is
$$
\ll \frac{Q^{k-l}}{|t_0|^{n+l}}
\leq\max\left\{\frac{1}{|t_0|^{n+k}},\frac{Q^{k-1}}{|t_0|^{n+1}}\right\},
$$
and hence
\begin{align}\label{estim1-tilde-K}
\tilde{K}_{n,t_0}\ll_{n,k} Q^k+\frac{Q^{k-1}}{|t_0|^{n+1}}+\frac{1}{|t_0|^{n+k}}
\qquad (|t_0|\leq 1).
\end{align}
Similarly,
\begin{align}\label{estim2-tilde-K}
\tilde{K}_{n,t_0}\ll_{n,k} Q^k+Q^{k-1}(\log(|t_0|+2))^{n+1}+(\log(|t_0|+2))^{n+k}
\qquad (|t_0|> 1).
\end{align}
Next, using Eq.~\eqref{eq00}, we have
\begin{eqnarray}
\label{eq036}
\left((s-1-it_0)\frac{L'(s-it_0, \chi_0)}{L(s-it_0, \chi_0)}\right)^k&=&
\left(s'\frac{L'(s'+1, \chi_0)}{L(s'+1, \chi_0)}\right)^k \nonumber
\\&=&\Bigl( \sum_{n=0}^{\infty}H_ns'^{n}\Bigr)^k=\sum_{n=0}^{\infty}\tilde{\tilde{H}}_ns'^n, 
\end{eqnarray}
where $\tilde{\tilde{H}}_n$ is defined by Eq.~\eqref{eq026} with  $2k$ replaced by $k$ and hence $\tilde{\tilde{H}}_n=\Ocal\left( Q^k\right)$.
From Eqs.~\eqref{eq029}, \eqref{eq030}, \eqref{eq034} and \eqref{eq036}, we therefore get 
\begin{eqnarray*}
\res( f_{\chi_0, t_0}(s); 1+it_0)&=&\frac{1}{(k-1)!}\lim_{s\rightarrow 1+it_0}\frac{d^{k-1}}{ds^{k-1}}\left[(s-1-it_0)^k G_q(s)\Gamma(s-1) X^{s-1}\right]
\\&=&
\frac{1}{(k-1)!}\lim_{s'\rightarrow 0}\frac{d^{k-1}}{(ds')^{k-1}}\left[s'^k G_q(s'+1+it_0)\Gamma(s'+it_0) X^{s'+it_0}\right]
\\&=&
\frac{1}{(k-1)!}\lim_{s'\rightarrow 0}\frac{d^{k-1}}{(ds')^{k-1}}\left[ \sum_{n=0}^{\infty}L_{n, t_0}(X) s'^n\right]
\\&=& L_{k-1, t_0}( X),
\end{eqnarray*}
where
\begin{equation}
\label{eq039}
L_{n, t_0}(X)=\sum_{n=n_1+n_2+n_3+n_4} M_{n_1, t_0}(X)N_{n_2, t_0}\tilde{K}_{n_3,t_0}\tilde{\tilde{H}}_{n_4}, 
\end{equation}
where $M_{n_1, t_0}(X)$ and $N_{n_2, t_0}, \tilde{K}_{n_3,t_0}$ and $\tilde{\tilde{H}}_{n_4}$ are given by Eqs.~\eqref{eq031}, \eqref{eq035} and \eqref{eq026} respectively.
Recall the Stirling formula  
\begin{equation}
\label{eq00000}
\Gamma(\sigma+it )=\sqrt{2\pi} \,(1+|t|)^{\sigma-1/2} e^{-\pi|t|/2 }\left(1+\Ocal\left(1/|t|\right)\right).
\end{equation}
Then we see that $\Gamma^{(n)}(it_0)=O_n(\exp(-C_1|t_0|))$ (with a certain absolute $C_1>0$) 
for $|t_0|>1$, while it is
$=O_n(|t_0|^{-n-1})$ for $0<|t_0|\leq 1$.
Therefore we find the following
evaluation of $L_{k-1, t_0}(X)$.
First, if $|t_0|>1$, from \eqref{eq031} and \eqref{estim2-tilde-K} we have
$$
L_{k-1, t_0}(X)\ll_k (\log X)^{k-1}e^{-C_2|t_0|} Q^{2k},
$$
where $0<C_2<C_1$.
Secondly, if $0<|t_0|\leq 1$, then
$$
L_{k-1, t_0}(X)\ll \sum_{k-1=n_1+n_2+n_3+n_4}(\log X)^{n_1}\frac{1}{|t_0|^{n_2+1}}
\left(Q^k+\frac{Q^{k-1}}{|t_0|^{n_3+1}}+\frac{1}{|t_0|^{n_3+k}}\right)Q^k,
$$
but the factors $(\log X)^{n_1}|t_0|^{-n_2}$, $(\log X)^{n_1}|t_0|^{-n_2-n_3}$ are
estimated by $\leq (\log X)^{k-1}+|t_0|^{-k+1}$, hence
\begin{align*}
L_{k-1, t_0}(X)&\ll_k  \left((\log X)^{k-1}+\frac{1}{|t_0|^{k-1}}\right)
\frac{Q^k}{|t_0|}\left(Q^k+\frac{Q^{k-1}}{|t_0|}+\frac{1}{|t_0|^k}\right)\\
&\ll \left((\log X)^{k-1}+\frac{1}{|t_0|^{k-1}}\right)
\frac{Q^k}{|t_0|}\left(Q^k+\frac{1}{|t_0|^k}\right).
\end{align*}
Therefore, we now conclude that 
\begin{align}
\label{eq038}
&\res( f_{\chi_0, t_0}(s); 1+it_0)=L_{k-1,t_0}(X)\\&\quad=\left\{
\begin{array}{lll}
\Ocal\left( (\log X)^{k-1}e^{-C_2|t_0|} Q^{2k}\right) &\; {\rm if}\; & |t_0|>1,\\
\displaystyle{\Ocal\left(\left((\log X)^{k-1}+\frac{1}{|t_0|^{k-1}}\right)
\frac{Q^k}{|t_0|}\left(Q^k+\frac{1}{|t_0|^k}\right) 
\right)} & \;{\rm if}\; & 0<|t_0|\leq 1.
\end{array}\right.\notag
\end{align}
\item[\underline{Pole $s_3$}:] For $\chi=\chi_0$ and $t_0\neq 0$, the function $f_{\chi_0, t_0}(s)$ has a pole at $s=1-it_0$ of order $k$. We calculate the residue of $f(s)$ at the point $1-it_0$ similar to that in the previous case. We get
\begin{equation}
\label{eq040}
\res( f_{\chi_0, t_0}(s); 1-it_0)=L_{k-1, -t_0}(X),
\end{equation}
where $L_{n, -t_0}(X)$ is defined by Eq.~\eqref{eq039} and satisfies the same estimate 
as \eqref{eq038}.
\item[\underline{Pole $s_4$}:] For $\chi= \chi_1$ and $t_0\neq 0$, the function $f_{\chi_1, t_0}(s)$ has a (possible) pole at $s=\beta_1+it_0$ of order $k$. Putting $s'=s-\beta_1-it_0$,  we write each term of $f_{\chi_1, t_0}(s)$ as follows 
\begin{equation}
\label{eq041}
X^{s-1}=X^{\beta_1-1+it_0}e^{s'\log X}=\sum_{n=0}^{\infty}\tilde{M}_{n,t_0}(X)s'^n, 
\end{equation}
\begin{equation}
\label{eq042}
\Gamma(s-1)=\Gamma(s_3+\beta_1-1+it_0)=\sum_{n=0}^{\infty}\tilde{N}_{n,t_0}s'^n, 
\end{equation}
where 
\begin{equation}
\label{eq043}
\tilde{M}_{n, t_0}(X)=X^{\beta_1 -1+it_0}\frac{(\log X)^n}{n!}, \quad 
\tilde{N}_{n,t_0}=\frac{\Gamma^{(n)}(\beta_1-1+it_0)}{n!}. 
\end{equation}
Using Lemma \ref{lem8}, we find that 
\begin{eqnarray}
\label{eq0000}
s'\frac{L'(s'+\beta_1, \chi_1)}{L(s'+\beta_1, \chi_1)} 
=
s'\left( \frac{1}{s'}+\sum_{n\geq 0} P_n s'^n\right)
=
\sum_{n\geq 0} P_{n-1} s'^{n}, 
\end{eqnarray}
where $P_{-1}=1$ and $P_n$ is defined in Lemma~\ref{lem8}. Hence, we get
\begin{equation}
\label{eq047}
\left(s'\frac{L'(s'+\beta_1, \chi_1)}{L(s'+\beta_1, \chi_1)}\right)^k=
\sum_{n=0}^{\infty}\tilde{P}_n s'^n,
\end{equation}
where 
\begin{equation}
\label{eq048}
\tilde{P}_n=\sum_{n=n_1+\cdots+n_{k}}P_{n_1-1}\cdots P_{n_k-1}=\Ocal\left((\log q)^n \right).
\end{equation}
On the other hand, we use Lemma~\ref{lem9} to write
\begin{equation*}
\frac{L'(s+it_0, \chi_1) }{L(s+it_0, \chi_1) }=\frac{L'(s'+\beta_1+2it_0, \chi_1) }{L(s'+\beta_1+2it_0, \chi_1) }=\sum_{n=0}^{\infty}Q_{n}{s'}^n.
\end{equation*}
This leads at once to 
\begin{equation}
\label{eq053}
\left(\frac{L'(s+it_0, \chi_1)}{L(s+it_0, \chi_1)}\right)^k=
\sum_{n=0}^{\infty}\tilde{Q}_{n} s'^n,
\end{equation}
where 
\begin{equation}
\label{eq054}
 \tilde{Q}_{n}= \sum_{n=n_1+\cdots+n_k}Q_{n_1}\cdots Q_{n_k}=\Ocal\left( \left(\log\left( q(|t_0|+2)\right)\right)^{n+k}+\frac{1}{|t_0|^{n+k}}\right).
\end{equation}
From Eqs.~\eqref{eq041}, \eqref{eq042}, \eqref{eq047} and \eqref{eq053}, we therefore get 
\begin{eqnarray*}
\res( f_{\chi_1, t_0}(s); s_4)&=&\frac{1}{(k-1)!}\lim_{s\rightarrow \beta+it_0}\frac{d^{k-1}}{ds^{k-1}}\left[(s-\beta_1-it_0)^{k} f_{\chi_1, t_0}(s)\right]
\\&=&
\frac{1}{(k-1)!}\lim_{s'\rightarrow 0}\frac{d^{k-1}}{(ds')^{k-1}}\left[s'^{k} f_{\chi_1, t_0}(s'+\beta_1+it_0)\right]
\\&=&
\frac{1}{(k-1)!}\lim_{s'\rightarrow 0}\frac{d^{k-1}}{(ds')^{k-1}} \sum_{n=0}^{\infty}R_{n, t_0}(q, X) s'^n
\\&=&
R_{k-1,t_0}(q,X),
\end{eqnarray*}
where 
\begin{equation}
\label{eq056}
R_{n,t_0}(q,X)=\sum_{n=n_1+n_2+n_3+n_4} \tilde{M}_{n_1,t_0}(X)\tilde{N}_{n_2,t_0}\tilde{P}_{n_3}\tilde{Q}_{n_4}, 
\end{equation}
with $ \tilde{M}_{n_1,t_0}(X)$ and $\tilde{N}_{n_2,t_0}, \tilde{P}_{n_3}$ and $\tilde{Q}_{n_4}$ defined by Eqs.~\eqref{eq043},~\eqref{eq048} and~\eqref{eq054} respectively. 
If $|t_0|>1$, then $\Gamma^{(n)}(\beta_1-1+it_0)=O_n(\exp(-C_3|t_0|))$ (with a certain
absolute $C_3>0$), and hence
\begin{align*}
R_{k-1,t_0}(q,X)&\ll X^{\beta_1-1}e^{-C_4|t_0|}\sum_{k-1=n_1+n_2+n_3+n_4} 
(\log X)^{n_1}(\log q)^{n_3+n_4+k}\\
&\ll X^{\beta_1-1}e^{-C_4|t_0|}\left((\log X)^{2k-1}+(\log q)^{2k-1}\right)
\end{align*}
(where $0<C_4<C_3$).
If $0<|t_0|\leq 1$, then 
$\Gamma^{(n)}(\beta_1-1+it_0)\ll_n |\beta_1-1+it_0|^{-n-1}\leq |t_0|^{-n-1}$.
Therefore
\begin{align*}
R_{k-1,t_0}(q,X)&\ll X^{\beta_1-1}\sum_{k-1=n_1+n_2+n_3+n_4} 
(\log X)^{n_1}\frac{1}{|t_0|^{n_2+1}}(\log q)^{n_3}\frac{1}{|t_0|^{n_4+k}}\\
&\ll \frac{X^{\beta_1-1}}{|t_0|^{k+1}}\left((\log X)^{k-1}+
\frac{1}{|t_0|^{k-1}}+(\log q)^{k-1}\right).
\end{align*}
Therefore we now obtain
\begin{align}
\label{eq055}
&\res( f_{\chi_1, t_0}(s); \beta_1+it_0)=R_{k-1,t_0}(q,X)\\
&\quad =\left\{
\begin{array}{lll}
O\left(X^{\beta_1-1}e^{-C_4|t_0|}\left((\log X)^{2k-1}+(\log q)^{2k-1}\right)\right)
& \;{\rm if}\; & |t_0|>1,\\
\displaystyle{O\left(\frac{X^{\beta_1-1}}{|t_0|^{k+1}}\left((\log X)^{k-1}+
\frac{1}{|t_0|^{k-1}}+(\log q)^{k-1}\right)\right)} & \;{\rm if}\; & 0<|t_0|\leq 1.
\end{array}\right.\notag
\end{align}
\item[\underline{Pole $s_5$}:] Similarly, we get 
\begin{equation}
\label{eq057}
\res(f_{\chi_1, t_0}(s); \beta_1-it_0)=R_{k-1,-t_0}(q, X),
\end{equation}
where $R_{k-1,-t_0}(q, x)$ is defined by Eq.~\eqref{eq056} and satisfies the same estimate as
\eqref{eq055}.
\item[\underline{Pole $s_6$}:] For $ \chi=\chi_1$ and $t_0=0$, the function $f_{\chi_1,t_0}(s)$ has a (possible) pole of order $2k$ at $s=\beta_1$. Putting $s'=s-\beta_1$, we find that  
\begin{equation*}
(s-\beta_1)\frac{L'(s, \chi_1)}{L(s, \chi_1)}
=s'\frac{L'(s'+\beta_1, \chi_1)}{L(s'+\beta_1, \chi_1)},
\end{equation*}
where the right-hand side is equal to $\sum_{n\geq 0}P_{n-1}s'^n$ by
Eq.~\eqref{eq0000}.
 Hence, we get
\begin{equation}
\label{eq058}
\left((s-\beta_1)\frac{L'(s, \chi_1)}{L(s,\chi_1)}\right)^{2k}=
\sum_{n=0}^{\infty}\tilde{\tilde{P}}_n s'^n,
\end{equation}
where $\tilde{\tilde{P}}_n$ is given by Eq.~\eqref{eq048} with $k$ replaced by $2k$.
From Eqs.~\eqref{eq041}, \eqref{eq042} and \eqref{eq058}, we therefore get 
\begin{eqnarray*}
\res( f_{\chi_1, t_0}(s); \beta_1)&=&\frac{1}{(2k-1)!}\lim_{s\rightarrow \beta_1}\frac{d^{2k-1}}{ds^{2k-1}}\left[(s-\beta_1)^{2k} G_q(s)\Gamma(s-1) X^{s-1}\right]
\\&=&
\frac{1}{(2k-1)!}\lim_{s'\rightarrow 0}\frac{d^{2k-1}}{(ds')^{2k-1}}\left[s'^{2k} G_q(s'+\beta_1)\Gamma(s'+\beta_1-1) X^{s'+\beta_1-1}\right]
\\&=&
\frac{1}{(2k-1)!}\lim_{s'\rightarrow 0}\frac{d^{2k-1}}{(ds')^{2k-1}}\left[ \sum_{n=0}^{\infty}Y_{n}(q, X) s'^n\right],
\end{eqnarray*}
where 
\begin{equation}
\label{eq061}
Y_{n}(q, X)=\sum_{n=n_1+n_2+n_3} \tilde{M}_{n_1,0}(X)\tilde{N}_{n_2,0}\tilde{\tilde{P}}_{n_3}, 
\end{equation}
with $ \tilde{M}_{n_1,0}(X)$ and $\tilde{N}_{n_2,0}$ and $\tilde{\tilde{P}}_{n_3}$ being defined by Eqs.~\eqref{eq043} and~\eqref{eq048} respectively. 
Since $\Gamma^{(n)}(\beta_1-1)=O((1-\beta_1)^{-n-1})$, 
we have 
\begin{align}
\label{eq060}
&\res( f_{\chi_1, t_0}(s); \beta_1)=Y_{2k-1}(q, X)\\
&\qquad=\Ocal\left(X^{\beta_1-1}((\log X)^{2k-1}+(1-\beta_1)^{-2k}+
(\log q)^{2k-1})\right).\notag
\end{align}
\end{itemize}
Consequently, we find from  Eqs.~\eqref{eq027}, ~\eqref{eq0028}, \eqref{eq038}, \eqref{eq040}, \eqref{eq055}, \eqref{eq057}  and \eqref{eq060} that 
\begin{equation}
\label{eq062}
\sum_{i=1}^{6}\res(f(s); s_i)=
\begin{cases}
\sum\limits_{\chi \mkern3mu \mathrel{\textsl{mod}} \mkern3mu q }\left|\frac{L^{\prime}(1+it_0, \chi)}{L(1+it_0, \chi)}\right|^{2k}+Z_{k, t_0}(q, X), &  t_0\neq 0;\cr
\sum\limits_{\substack{\chi \mkern3mu \mathrel{\textsl{mod}} \mkern3mu q\\ \chi\neq \chi_0} }\left|\frac{L^{\prime}(1, \chi)}{L(1,\chi)}\right|^{2k}+Z_{k,0}(q,X),&  t_0=0,
\end{cases}
\end{equation}
where 
\begin{align}
\label{est_Z}
&Z_{k,t_0}(q, X)=L_{k-1, t_0}(q,X)+L_{k-1, -t_0}(q,X)+\delta_1 R_{k-1,t_0}(q,X)+
\delta_1 R_{k-1,-t_0}(q,X)
\\&=\left\{
\begin{array}{ll}
\Ocal\left((\log X)^{k-1}e^{-C_2|t_0|}Q^{2k}+
\delta_1 X^{\beta_1-1}e^{-C_4|t_0|}((\log X)^{2k-1}+(\log q)^{2k-1})\right) & (|t_0|>1),
\\
\displaystyle{\Ocal\left(\left((\log X)^{k-1}+\frac{1}{|t_0|^{k-1}}\right)
\frac{Q^k}{|t_0|}\left(Q^k+\frac{1}{|t_0|^k}\right) 
\right)} & (0<|t_0|\leq 1)
\end{array}\right.\notag
\end{align}
(note that when $0<|t_0|\leq 1$ the right-hand side of \eqref{eq055} is absorbed into
the right-hand side of \eqref{eq038})
and 
\begin{equation}
\label{est_Z0}
Z_{k,0}(q, X)=J_{2k}(X)+\delta_1 Y_{2k-1}(q,X)=\Ocal\left((\log X)^{2k}Q^{2k}+
\delta_1 X^{\beta_1-1}(1-\beta_1)^{-2k}\right).
\end{equation}
\subsubsection{The evaluation of the integration on $I_i$  }
Now, we are going to estimate the integration on $I_i$ where $i\in \{2, 3, 4\}$. Denote 
$$ J_i=\frac{1}{2\pi i}\int_{I_i}G_q(s)X^{s-1}\Gamma(s-1) \, ds.$$
On these paths, in view of Eqs.~\eqref{pro31}--\eqref{pro33}, we have
\begin{equation*}
\frac{L'(s\pm it_0,\chi)}{L(s\pm it_0,\chi)}\ll \log (q(T+|t_0|+2))
\end{equation*}
on $I_i$,
for any $\chi$ modulo $q$ (in the case $\chi=\chi_1$, we use \eqref{c_1_condition}). 
First consider the integral on $I_3$. Then
$\left|X^{s-1}\right| \leq X^{A(c_1)-1}$,
and hence
\begin{eqnarray*}
J_3
&\ll& 
\varphi(q)\left(\log \left(q(T+|t_0|+2)\right)\right)^{2k}X^{A(c_1)-1}\int\limits_{A(c_1)-iT}^{A(c_1)+iT}\left|\Gamma(s-1)\right|\, |ds|
\\&\ll &
\varphi(q)\left(\log \left(q(T+|t_0|+2)\right)\right)^{2k}X^{A(c_1)-1}\int\limits_{-T}^{T}\left|\Gamma\left(A(c_1)-1+it\right)\right|\, dt. 
\end{eqnarray*}
From Eq.~\eqref{eq00000} we obtain 
\begin{equation*}
\left|\Gamma\left(A(c_1)-1+it\right)\right| \ll  (1+|t|)^{A(c_1)-\frac{3}{2}}\, e^{-\pi |t|/2},
\end{equation*}
and so
\begin{equation}
\label{J3}
J_3 
\ll 
\varphi(q)\left(\log \left(q(T+|t_0|+2)\right)\right)^{2k}X^{A(c_1)-1}.
\end{equation}
Now we calculate the integrals along the
horizontal segments. Since the integrand has the same
absolute value at conjugate points, it suffices to consider only the upper segment $t = T$. On this segment we have the estimate
\begin{equation*}
J_2
\ll
\varphi(q)\left(\log \left(q(T+|t_0|+2)\right)\right)^{2k}\int\limits_{A(c_1)}^{3}\left|\Gamma(\sigma-1+iT)\right|X^{\sigma-1}\, d\sigma.
\end{equation*}
Again, using Eq.~\eqref{eq00000}, we get 
\begin{eqnarray}
\label{J2}
J_2
&\ll&
    \varphi(q)\left(\log\left(q(T+|t_0|+2)\right)\right)^{2k}X^{-1}(1+T)^{-\frac{3}{2}}\ e^{-\frac{\pi T}{2}}\int\limits_{A(c_1)}^{3}\left((1+T)X\right)^{\sigma}\, d\sigma \nonumber
\\&\ll&
\varphi(q)\left( \log \left(q(T+|t_0|+2)\right)\right)^{2k}\frac{X^{2}(1+T)^{\frac{3}{2}}\ e^{-\frac{\pi T}{2}}}{\log \left( (1+T)X\right)},
\end{eqnarray}
and $J_4$ can be estimated similarly.
\subsection{The conclusion}\label{subsec5-3}
On the half-lines $\sigma=3$ and $|t| \geq T$, we have 
\begin{equation*}
\int\limits_{\substack{\sigma=3\\  |t| \geq T}}G_q(s)X^{s-1}\Gamma(s-1)\, ds 
\ll 
 \varphi(q)X^2 \int\limits_{t\geq T}\left|\Gamma(2+it)\right|\, dt.
\end{equation*}
Again applying \eqref{eq00000}, we get 
\begin{equation}
\label{eq066}
\int\limits_{\substack{\sigma=3\\ |t| \geq T}}G_q(s)X^{s-1}\Gamma(s-1)\, ds 
\ll 
\varphi(q) X^2 (1+T)^{3/2}\, e^{-\pi T/2}.
\end{equation}
Therefore, by combining Eqs.~\eqref{eq062}, \eqref{J3}, \eqref{J2} and \eqref{eq066}, we obtain
\begin{equation*}
S_q(X)=
\begin{cases}
\sum\limits_{\chi \mkern3mu \mathrel{\textsl{mod}} \mkern3mu q }\left|\frac{L^{\prime}(1+it_0, \chi)}{L(1+it_0, \chi)}\right|^{2k}+Z_{k, t_0}(q, X)+W, &  t_0\neq 0;\cr
\sum\limits_{\substack{\chi \mkern3mu \mathrel{\textsl{mod}} \mkern3mu q\\ \chi\neq \chi_0} }\left|\frac{L^{\prime}(1, \chi)}{L(1, \chi)}\right|^{2k}+Z_{k,0}(q,X)+W,&  t_0=0,
\end{cases}
\end{equation*}
where $W$ is estimated by 
\begin{multline}
\label{est_W}
  \Ocal\left(\varphi(q)\left(\log \left(q(T+|t_0|+2)\right)\right)^{2k}\left\{X^{A(c_1)-1}+\frac{X^2(1+T)^{\frac{3}{2}}e^{-\frac{\pi T}{2}}}{\log\left((1+T)X\right)} \right\}+\varphi(q)X^2(1+T)^{\frac{3}{2}}e^{-\frac{\pi T}{2}}\right).    
\end{multline}
Now we combine Eq.~\eqref{eq019} and the above formula.
The remaining task is to evaluate 
$Z_{k,t_0}(q,X)+W+Y$, under some suitable choices of 
parameters $T$ and $X$.
Our choices are $T=q$ and $X=\exp \left( \lambda(\log q)^{2}\right)$ (where $\lambda$
is a large positive number).

First consider $W$.    Under the above choices, we have
\begin{align}\label{est000}
X^{A(c_1)-1}=\exp\left(-\frac{c_1\lambda(\log q)^2}{\log(q(q+|t_0|+2))}\right)
\leq \exp\left(-\frac{c_1\lambda(\log q)^2}{2\log(q+|t_0|+2)}\right),
\end{align}
which is, when $q\geq |t_0|+2$, 
\begin{align*}
\leq \exp\left(-\frac{c_1\lambda(\log q)^2}{2\log(2q)}\right)
\leq \exp\left(-\frac{c_1\lambda(\log q)^2}{4\log q}\right)
= \exp\left(-\frac{c_1\lambda}{4}\log q\right).
\end{align*}
We choose $\lambda$ sufficiently large: $\lambda\geq\max\{ 4/c_1, 2/\log 2\}$.
Then from the above we see that $X^{A(c_1)-1}\leq \exp(-\log q)=q^{-1}$.
Since the factor $e^{-\pi T/2}=e^{-\pi q/2}$ is very small with respect to $q$, 
from \eqref{est_W} and \eqref{est000} we obtain
\begin{align}\label{estfinal_W}
W=O\left(\varphi(q)\left(\log \left(q(q+|t_0|+2)\right)\right)^{2k}
\exp\left(-\frac{B_1(\log q)^2}{\log(q+|t_0|+2)}\right)\right).
\end{align}
with $B_1=c_1\lambda/2\geq 2$.
In particular, when $t_0=0$, we have
\begin{align}\label{estfinal_W_0}
W=O\left(\frac{\varphi(q)}{q}(\log q)^{2k}\right).
\end{align}

Next, we have
\begin{align*}
Y\ll \frac{\varphi(q)}{q}(\log q)^{4k+4}+\frac{\varphi(q)(\log q)^{4k}}
{\exp((\lambda/2)(\log q)^2)}.
\end{align*}
By the assumption $\lambda\geq 2/\log 2$ we have
$$\exp((\lambda/2)(\log q)^2)\geq \exp((\lambda/2)\log 2\log q)\geq \exp(\log q)=q,$$ 
so
\begin{align}\label{estfinal_Y}
Y=O\left(\frac{\varphi(q)}{q}(\log q)^{4k+4}\right).
\end{align}
Lastly, we find
\begin{align}
\label{estfinal_Z}
&Z_{k,t_0}(q):=Z_{k,t_0}(q,\exp(\lambda(\log q)^2))
\\&=\left\{
\begin{array}{ll}
\Ocal\left((\log q)^{2k-2}e^{-B_2|t_0|}Q^{2k}\right) & (|t_0|>1),
\\
\displaystyle{\Ocal\left(\left((\log q)^{2k-2}+\frac{1}{|t_0|^{k-1}}\right)
\frac{Q^k}{|t_0|}\left(Q^k+\frac{1}{|t_0|^k}\right) 
\right)} & (0<|t_0|\leq 1)
\end{array}\right.\notag
\end{align}
where $B_2=\min\{C_2,C_4\}$, and 
\begin{align}
\label{estfinal_Z0}
&Z_{k,0}(q):=Z_{k,0}(q,\exp(\lambda(\log q)^2))
\\&\qquad=
\Ocal\left((\log q)^{4k}Q^{2k}+
\delta_1\exp\left(-(1-\beta_1)\lambda(\log q)^2\right)(1-\beta_1)^{-2k}\right).\notag
\end{align}
Collecting all of the above estimates, 
we arrive at the assertions of Theorems \ref{Thm1} and \ref{cor1}.
\begin{Remark}
\label{rem1}
\emph{Using Proposition~\ref{mainpro2} instead of Proposition~\ref{mainpro}, the same proof works for $q=p$ a prime number and then one can show that the condition $(m,q)=1$ in the main term in Theorems \ref{Thm1} and \ref{cor1} is omitted. }
\end{Remark}
\section{Proof of Theorem \ref{Thm2}}

Now we proceed to the proof of Theorem~\ref{Thm2}. We deduce the existence of $\mu$ by the general solution to the Stieltjes moment problem and the unicity by the criterion of Carleman. First, we define the ``problem of moments'' which was showed up in the work of Stieltjes (1894-1895). 

\subsection{Problem of moments}
The problem of moments is to find a bounded non-decreasing function $\psi(x)$ in the interval $[0, \infty)$ such that its ''moments" $\int_{0}^{\infty} x^k \, d\psi(x)$, $k=0, 1, 2, \cdots$, have a prescribed set of values 
\begin{equation}
\label{measure}
\int_{0}^{\infty}x^k\, d\psi(x)=\mu_k, \qquad k=1, 2, \cdots.
\end{equation}
This problem  was first raised and solved by Stieltjes (1895-1895) for non-negative measures. He proved in~\cite{Stieltjes} that Eq.~\eqref{measure} has a solution if and only if the following determinants are non-negative:
\begin{equation*}
\Delta_k=
  \left| {\begin{array}{ccccc}
   \mu_0 & \mu_1 & \mu_2 & \cdots& \mu_k\\
   \mu_1 & \mu_2& \mu_3& \cdots& \mu_{k+1}\\
   \mu_2& \mu_3& \mu_4& \cdots& \mu_{k+2}\\
  \vdots& \vdots& \vdots& \ddots& \vdots\\
   \mu_k& \mu_{k+1}& \mu_{k+2}& \cdots& \mu_{2k}
  \end{array} } \right|= \left| \mu_{i+j}\right|_{i, j=0}^{k},\qquad \qquad k=0, 1, 2, \cdots,
\end{equation*}
\begin{equation*}
  \Delta^{*}_k=
    \left| {\begin{array}{ccccc}
     \mu_1 & \mu_2 & \mu_3 & \cdots& \mu_{k+1}\\
     \mu_2 & \mu_3& \mu_4& \cdots& \mu_{k+2}\\
     \mu_3& \mu_4& \mu_5& \cdots& \mu_{k+3}\\
     \vdots& \vdots& \vdots& \ddots& \vdots\\
     \mu_{k+1}& \mu_{k+2}& \mu_{k+3}& \cdots& \mu_{2k+1}
    \end{array} } \right| =\left| \mu_{i+j+1}\right|_{i, j=0}^{k},\qquad k=0, 1, 2, \cdots,
\end{equation*}
The following proposition provides the necessary and sufficient condition for the existence of a solution of the Stieltjes moment problem.
\begin{prop}
\label{Stieltjes4}
A necessary and sufficient condition that the Stieltjes moment problem defined by the sequence of moments $\{\mu_k\}_{k=0}^{\infty}$ shall have a solution is that the functional $\mu(P)$ is non-negative, that is $$\mu(P)=\sum_{j=0}^{k}\mu_jx_j\geq 0,$$
for any polynomial 
 $$P(u)=x_0+x_1u+\cdots +x_ku^k, \qquad (x_0, x_1, \cdots, x_k\in, \mathbb{R})$$ which is non-negative for all $u\geq 0$.
\end{prop}
\begin{proof}
A proof of this result can be found in \cite[Theorem 1.1]{Sh}. 
\end{proof}
Now, consider the following two polynomials  
\begin{equation*}
 Q_k(u)=\left( x_0+x_1u+\cdots+x_ku^k\right)^2,
\end{equation*}
\begin{equation*}
 R_k(u)=u\left( x_0+x_1u+\cdots+x_ku^k\right)^2.
\end{equation*}
We note that $Q_k(u)\geq 0$ and $R_k(u)\geq 0$ for $u\in [0, \infty)$ and $k=0, 1, 2, \cdots.$ Using the fact that any polynomial $P(u)\geq 0$ for $u\geq 0$ can be written in the form $p_1(u)^2+up_2(u)^2$ with certain polynomials $p_1(u)$ and $p_2(u)$ (see the footnote in \cite[page 6]{Sh}), we translate the condition in Proposition~\ref{Stieltjes4} into the following condition 
\begin{equation}
\label{condition1}
 \mu(P)\geq 0  \qquad  \text{if and only if} \qquad  \mu(Q_k)\geq 0 \quad  \text{and} \quad \mu(R_k)\geq 0,  
\end{equation} 
for all $k=0, 1, 2, \cdots, .$
On the other hand, $Q_k(u)$ and $R_k(u)$ are of the form 
\begin{eqnarray*}
    &&Q_k(u)=\sum_{i,j=0}^{k}x_ix_j u^{i+j},
    \\&& R_k(u)=\sum_{i,j=0}^{k}x_ix_j u^{i+j+1},
\end{eqnarray*}
so, it follows that 
\begin{eqnarray*}
    &&\mu(Q_k)=\sum_{i,j=0}^{k}x_ix_j \mu_{i+j},
    \\&& \mu(R_k)=\sum_{i,j=0}^{k}x_ix_j \mu_{i+j+1}.
\end{eqnarray*}
From the theory of quadratic forms it is well known that
\begin{equation*}
    \label{condition2}
\mu(Q_k)\geq 0 \ \text{and}\ \mu(R_k)\geq 0 \quad \text{if and only if} \quad \Delta_k=\left| \mu_{i+j}\right|_{i, j=0}^{k}\geq 0\ \text{and}\ \Delta^{*}_k=\left| \mu_{i+j+1}\right|_{i, j=0}^{k}\geq 0.
\end{equation*}
From the above, we deduce the following result:
\begin{cor}
\label{stieltjes10}
A necessary and sufficient condition that the Stieltjes moment problem defined by the sequence of moments $\{\mu_k\}_{k=0}^{\infty}$ shall have a solution is that 
$$
\Delta_k=\left| \mu_{i+j}\right|_{i, j=0}^{k}\geq 0\quad \text{and}\quad \Delta^{*}_k=\left| \mu_{i+j+1}\right|_{i, j=0}^{k}\geq 0,$$
for all $k=0, 1, 2, \cdots.$
\end{cor}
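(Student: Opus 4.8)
The plan is to deduce the Corollary by composing the three equivalences that have just been set up, so the body of the argument is largely bookkeeping. \textbf{Step 1.} By Theorem~\ref{Stieltjes4}, the Stieltjes moment problem attached to $\{\mu_k\}_{k=0}^{\infty}$ has a solution if and only if the functional $\mu$ is non-negative on the cone of all real polynomials that are non-negative on $[0,\infty)$. \textbf{Step 2.} Every such polynomial $P$ can be written $P(u)=p_1(u)^2+u\,p_2(u)^2$ for suitable real polynomials $p_1,p_2$ (see the footnote in \cite[Page~6]{Sh}), while conversely $p_1(u)^2+u\,p_2(u)^2\geq 0$ on $[0,\infty)$ for arbitrary $p_1,p_2$; since $\mu$ is linear, specializing to $p_2=0$ and to $p_1=0$ shows that the condition of Step~1 is equivalent to $\mu(Q_k)\geq 0$ and $\mu(R_k)\geq 0$ for every $k\geq 0$ and all real $x_0,\dots,x_k$, i.e. to the equivalence \eqref{condition1}. \textbf{Step 3.} Expanding, $\mu(Q_k)=\sum_{i,j=0}^{k}\mu_{i+j}x_ix_j$ and $\mu(R_k)=\sum_{i,j=0}^{k}\mu_{i+j+1}x_ix_j$ are the Hankel quadratic forms with matrices $(\mu_{i+j})_{i,j=0}^{k}$ and $(\mu_{i+j+1})_{i,j=0}^{k}$; by the quadratic-form criterion recalled just above the Corollary, non-negativity of all of these forms (over all $k$) is equivalent to $\Delta_k\geq 0$ and $\Delta_k^{*}\geq 0$ for every $k$. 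Concatenating the equivalences of Steps 1--3 gives the statement.

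The point that needs care is Step~3: for a single symmetric matrix, non-negativity of the leading principal minor alone does not force positive semi-definiteness, so one really needs the entire family $\{\Delta_k\}_{k\geq 0}$ together with the Hankel structure (which, for instance, forces $\mu_1=0$ once $\mu_0=0$, and more generally makes all subsequent determinants vanish once a minor drops to zero after a run of positive ones). I would invoke the resulting criterion for Hankel forms as classical --- along the lines of \cite{Sh} or Gantmacher's treatment of Hankel forms --- rather than reprove it. It is also worth stating explicitly that ``solution'' here means a bounded non-decreasing $\psi$ on $[0,\infty)$ with $\int_0^\infty u^k\,d\psi(u)=\mu_k$, and that the uniqueness of the limiting measure $\mu=\mu(t_0)$ used in Theorem~\ref{Thm2} is a separate matter, settled by Carleman's criterion and not by this Corollary.
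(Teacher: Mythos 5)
Your Steps 1 and 2 track the paper exactly, and you are right to single out Step 3 as the delicate point; but the difficulty there is sharper than your comment suggests. For the ``only if'' direction everything is fine: a positive semi-definite Hankel matrix has all leading principal minors $\geq 0$. For the ``if'' direction, however, there is no classical criterion of the kind you propose to invoke: non-negativity of all leading principal minors of a symmetric matrix, even for the full infinite Hankel family, does not imply positive semi-definiteness, and the Hankel structure does not rescue it. A concrete counterexample to the Corollary as written: take $\mu_0=\mu_1=\mu_2=0$, $\mu_3=-1$, and $\mu_k=0$ for $k\geq 4$. A direct computation gives $\Delta_0=\Delta_1=\Delta_2=0$, $\Delta_3=1$, $\Delta_k=0$ for $k\geq 4$, and $\Delta_0^*=\Delta_1^*=0$, $\Delta_2^*=1$, $\Delta_k^*=0$ for $k\geq 3$, so every $\Delta_k$ and $\Delta_k^*$ is non-negative; yet this cannot be a Stieltjes moment sequence, since $\mu_0=\int d\psi=0$ forces $\psi$ to be constant and hence all moments to vanish, contradicting $\mu_3=-1$. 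Equivalently, $\mu\bigl((1+u^3)^2\bigr)=\mu_0+2\mu_3+\mu_6=-2<0$, so the Hankel quadratic form fails to be positive semi-definite even though all its leading principal minors are non-negative. The correct sufficient condition is positive semi-definiteness of the Hankel matrices $(\mu_{i+j})$ and $(\mu_{i+j+1})$, i.e.\ non-negativity of \emph{all} their principal minors, not just the leading ones; that is what should be invoked at Step 3 (and is what Shohat--Tamarkin actually use). This does not damage the application to Theorem~\ref{Thm2}: there the limiting moments $M_k(t_0)$ are pointwise limits of moments $m_k(p,t_0)$ of genuine probability measures, whose Hankel matrices are positive semi-definite, and positive semi-definiteness passes to entrywise limits, so the stronger hypothesis is available anyway.
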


\subsection{Proof of Theorem \ref{Thm2}}
\subsubsection*{Existence of $\mu$}

We define the measure $\mu_q$, depending on $t_0$, by  $\mu_q\left( [0,v]\right):=D_q(v, t_0)$ where $D_q(v, t_0)$ is given by Eq.~\eqref{D}. Then,
we have $\mu_q$ is non-negative and $\mu_q\left([0, \infty)\right)=1$. Setting 
\begin{eqnarray*}
m_k(q,t_0)&:=&\int_{0}^{\infty}v^{k}\, d\mu_q(v)\\
&=&\frac{1}{\varphi(q)}\sum_{\chi\hspace{-0.2cm}\mod q}^{\quad \prime}\left|\frac{L'(1+it_0)}{L(1+it_0)} \right|^{2k},
\end{eqnarray*}
where $\sum^{\prime}$ runs over all Dirichlet characters $\chi$ modulo $q$ except the principal character in the case $t_0=0$. By Corollary~\ref{stieltjes10}, we get 
$$
\Delta_k(q,t_0)=\left| m_{i+j}\right|_{i, j=0}^{k}\geq 0\quad \text{and}\quad \Delta^{*}_k(q,t_0)=\left| m_{i+j+1}\right|_{i, j=0}^{k}\geq 0.$$
On the other hand, from Theorems~\ref{Thm1} and \ref{cor1}, $m_k(q, t_0)$ can be written as follows
\begin{equation*}
m_k(q, t_0)=M_k(q, t_0)+N_k(q, t_0), 
\end{equation*}
where 
\begin{equation*}
M_k(q, t_0)=\sum_{\substack{m\geq 1\\ (m,q)=1}}\frac{\left(\sum\limits_{m=m_1 m_2\cdots m_k}\Lambda(m_1)\cdots\Lambda(m_k)\right)^2}{m^2}
\end{equation*}
and $N_k(q, t_0)$ is the error term which tends to 0 as $q\to\infty$.
Therefore, we get 
\begin{equation*}
    \Delta_k(q, t_0)=\left| M_{i+j}(q, t_0)\right|_{i, j=0}^{k}+E_k(q, t_0)\geq 0
\end{equation*}
and 
\begin{equation*}
    \Delta^{*}_k(q, t_0)=\left| M_{i+j+1}(q, t_0)\right|_{i, j=0}^{k}+E^*_k(q, t_0)\geq 0,
\end{equation*}
where $E_k(q, t_0)$ and $E^*_k(q, t_0)$ are error terms which tend to $0$ as $q\longrightarrow \infty.$
Now, we assume that $q=p$ is a prime number. By Remark~\ref{rem1}, $m_k(p, t_0)$ is rewritten as 
\begin{equation*}
m_k(p, t_0)=M_k(t_0)+N_k(p, t_0),  
\end{equation*}
where 
\begin{equation*}
M_k(t_0)=\sum_{m\geq 1}\frac{\left(\sum\limits_{m=m_1 m_2\cdots m_k}\Lambda(m_1)\cdots\Lambda(m_k)\right)^2}{m^2},
\end{equation*}
which is independent of $p$. By letting $p$ tend to infinity  follows that
\begin{equation}
\label{condition}
    \left| M_{i+j}(t_0)\right|_{i, j=0}^{k}\geq 0 \quad \text{and} \quad \left| M_{i+j+1}(t_0)\right|_{i, j=0}^{k}\geq 0.
\end{equation}
We again apply Corollary~\ref{stieltjes10} to find a measure $\mu=\mu(t_0)$ such that 
\begin{equation*}
\lim_{p\longrightarrow \infty}\frac{1}{p-1}\sum_{\chi\hspace{-0.2cm}\mod p}^{\quad \prime}\left|\frac{L'(1+it_0)}{L(1+it_0)} \right|^{2k}
=
\int_{0}^{\infty}v^{k}\, d\mu(v),
\end{equation*}
because the left-hand side is equal to $M_k(t_0).$ 
\subsubsection*{Uniqueness of $\mu$}
In order to complete our proof, it remains to show that $\mu$ is unique. 
There are several sufficient conditions for uniqueness. In our proof we shall use Carleman's condition~\cite{Pro}, which states that the solution is unique if
\begin{equation*}
 \sum_{k\geq 1}\frac{1}{M_k^{\frac{1}{2k}}}
= \infty.
\end{equation*}
We use Lemma~\ref{lem4} to get 
\begin{equation*}
M_k \leq \sum_{m\geq 1} \frac{(\log m)^{2k}}{m^2}.
\end{equation*}
Now, we notice that 
\begin{eqnarray*}
\sum_{m\geq 1} \frac{(\log m)^{2k}}{m^2} 
&\ll  & 
\int_{1}^{\infty} \frac{(\log t)^{2k}}{t^2}\, dt 
\\&= & \int_{0}^{\infty} u^{2k}\, e^{-u}\, du
\ll  \Gamma(2k+1)
= (2k)!.
\end{eqnarray*}
Then, we have 
\begin{equation}
\label{eq21}
M_k \ll (2k)!.
\end{equation}
Therefore, we get
\begin{equation*}
\sum_{k\geq 1}\frac{1}{M_k^{\frac{1}{2k}}} \gg \sum_{k\geq 1}\left(\frac{1}{(2k)!}\right)^{\frac{1}{2k}}
= \infty
\end{equation*}
It follows that the condition of Carleman is checked and thus the function $\mu$ is unique. This completes the proof. 

\section{Scripts}
We present here an easier GP script for computing the values $\left|(L^\prime /L)(1, \chi)\right|$. In this loop, we use the Pari package " ComputeL" written by Tim Dokchitser to compute values of $L$-functions and its derivative. This package is available on-line at 
 \begin{center}
 {\tt www.maths.bris.ac.uk/\~{}matyd/}
 \end{center}
 On this base we write the next script. the authors would like to thank Professor Olivier Ramar\'e for helping us in writing it. We simply plot Figure~\ref{Fig1} via \\
 
  \begin{verbatim}
   read("computeL"); /* by Tim Dokchitser */
  default(realprecision,28); 
  {run(p=37)=
     local(results, prim, avec);
     prim = znprimroot(p);
     results = vector(p-2, i, 0);
     for(b = 1, p-2,
        avec = vector(p,k,0);
        for (k = 0, p-1, avec[lift(prim^k)+1]=exp(2*b*Pi*I*k/(p-1)));
        conductor = p; 
        gammaV    = [1];
        weight    = b%2; 
        sgn       = X;
        initLdata("avec[k%p+1]",,"conj(avec[k%p+1])"); 
        sgneq = Vec(checkfeq());
        sgn   = -sgneq[2]/sgneq[1]; 
        results[b] = abs(L(1,,1)/L(1));
           \\print(results[b]);
        );
     return(results);
  }

  {goodrun(borneinf, bornesup)=
     forprime(p = borneinf, bornesup,
              print("------------------------");
              print("p = ",p);
              print(vecsort(run(p))));}
  \end{verbatim}

\subsection*{Acknowledgement} 
The first author is supported by ``JSPS KAKENHI Grant Number: JP25287002''. The second author is supported by the Austrian Science
Fund (FWF): Projects F5507-N26, and F5505-N26 which are parts
of the special Research Program  ``Quasi Monte
Carlo Methods : Theory and Application''. Part of this work was also done while she was supported by the Japan
Society for the Promotion of Science (JSPS) ``Overseas researcher under Postdoctoral Fellowship of JSPS''. The authors would like to thank Professor J{\"o}rn Steuding for helpful feedback, and acknowledges fruitful discussions with Dr. Ade Irma Suriajaya. The authors also express their gratitude to the anonymous referee for a lot of useful comments, especially for pointing out inaccuracies included in the original version of the manuscript.

\medskip\noindent {\footnotesize Kohji Matsumoto: Graduate School of Mathematics, Nagoya University, Furo-cho, Chikusa-ku, Nagoya, Aichi 464-8602, Japan. \\
e-mail: {\tt kohjimat@math.nagoya-u.ac.jp}}

\medskip\noindent {\footnotesize Sumaia Saad Eddin: Institute of Financial Mathematics and Applied Number Theory, Johannes Kepler Universit\"at Linz, Altenbergerstrasse 69, 4040 Linz, Austria\\
e-mail: {\tt sumaia.saad\_eddin@jku.at}}
\end{document}